
\documentclass[11pt, reqno]{amsart}
\usepackage{epsfig}
\usepackage{amsmath, amsthm, amssymb}

\usepackage{graphicx,xypic}
\numberwithin{equation}{section}
\newcommand{\heis}{{\bf H}}
\newcommand{\jacob}[1]{\ensuremath{\mathcal{J}(#1)}}
\newcommand{\RS}{\ensuremath{\Sigma_g}}
\newcommand{\mcg}[1]{\ensuremath{\mathcal{M}_{#1}}}
\newcommand{\spacetheta}{{\bf \Theta}_N^\Pi}

\newcommand{\qgr}{A}
\newcommand{\noproof}{\begin{flushright} \ensuremath{\square} \end{flushright}}
\DeclareMathOperator{\id}{id}
\DeclareMathOperator{\op}{Op}
\DeclareMathOperator{\Aut}{Aut}
\DeclareMathOperator{\Ann}{Ann}
\newtheorem{theorem}{Theorem}[section]

\newtheorem{proposition}[theorem]{Proposition}
\newtheorem{cor}[theorem]{Corollary}
\theoremstyle{definition}
\newtheorem{definition}[theorem]{Definition}

\theoremstyle{remark}
\newtheorem{remark}[theorem]{Remark}

\begin{document}

\title{Classical theta functions from a quantum group perspective}
\author{R{\u{a}}zvan Gelca}
\address{Department of Mathematics and Statistics, Texas Tech University, Lubbock, TX 79409-1042. USA.} \email{rgelca@gmail.com}
\thanks{Research of the first author partially supported by the NSF, award No. DMS 0604694}
\author{Alastair Hamilton}
\address{Department of Mathematics and Statistics, Texas Tech University, Lubbock, TX 79409-1042. USA.} \email{alastair.hamilton@ttu.edu}
\subjclass[2010]{81R50, 57M27, 14K25}
\keywords{Abelian Chern-Simons theory, theta functions, quantum groups}

\begin{abstract}
In this paper we construct the quantum group, at roots of unity, of abelian Chern-Simons theory.
We then use it to model classical theta functions and  the actions of the Heisenberg and modular groups on them.
\end{abstract}

\maketitle

\tableofcontents

\section{Introduction}\label{sec_introduction}

In 1989, Witten \cite{witten}
introduced a series of knot and 3-manifold invariants
based on a quantum field theory with the Chern-Simons lagrangian. Witten defined
these invariants starting with a compact simple Lie group,
the gauge group of the theory, and defining a path integral for each 3-manifold
as well as for  knots and links in such a 3-manifold. The case
most intensively studied is that where the gauge group is
$SU(2)$, which is related to the Jones polynomial of knots \cite{jones}.
Witten pointed out that the case where the gauge group is $U(1)$ (abelian
Chern-Simons theory) is
related to the linking number of knots and to classical theta functions.
Here and below we use the term ``classical'' not in the complex analytical
distinction between classical and canonical, but to specify the classical
theta functions as opposed to the non-abelian ones.

Abelian Chern-Simons theory was studied in \cite{andersen}, \cite{gelcauribe1},
\cite{manoliu1}, \cite{manoliu2} and \cite{moo}.
Our interest was renewed by the discovery, in \cite{gelcauribe1}, that
all constructs of abelian Chern-Simons theory can be derived from
the theory of theta functions. More precisely, in \cite{gelcauribe1}
it was shown that the action of the Heisenberg group on theta functions
discovered by A. Weil, and the action of the modular group lead naturally
to manifold invariants.

The current paper follows the same line. It has a two-fold goal. First,
we describe the quantum group of abelian Chern-Simons theory  which,
to our knowledge, has not been studied before.
The intuition for the construction comes from the properties of
theta functions.
Second, we interpret the classical theta functions, the action of the Heisenberg
group, and the action of the modular group in terms of vertex models using
this quantum group. We mention that the modular functor
of abelian Chern-Simons theory
will be described by the authors in a subsequent paper.

\subsection*{Notation and conventions}

If $V$ and $W$ are vector spaces, we denote the linear map that permutes $V$ and $W$ by
\[ P: V\otimes W \to W\otimes V, \qquad v\otimes w \mapsto  w\otimes v. \]

Given a Hilbert space $H$, we will denote the group of unitary transformations of $H$ by $U(H)$. In particular, we will denote the group of unitary transformations of $\mathbb{C}^n$ by $U(n)$.

Given a ring $R$, we denote the ring of formal power series over $R$ in a single variable $h$ by $R[[h]]$. We denote the ring of \emph{noncommutative} polynomials in the variables $X_1,\ldots,X_n$ by $R\langle X_1,\ldots,X_n\rangle$. The ring of integers modulo $N$ will be denoted by $\mathbb{Z}_N$.

Given a Riemann surface $\RS$, we will denote the mapping class group of $\RS$ by $\mcg{\RS}$. If $\mathfrak{g}$ is a Lie algebra, we will denote its universal enveloping algebra by $\mathcal{U}(\mathfrak{g})$.

\section{Classical theta functions from a topological perspective} \label{sec_thetafun}

In this section we  recall the essential facts from \cite{gelcauribe1}. Classical theta functions may be described as sections of a certain line bundle over the Jacobian variety $\jacob{\RS}$ associated to a closed genus $g$ Riemann surface $\RS$.

The Jacobian variety of a genus $g$ Riemann surface $\RS$ is constructed as follows (see \cite{farkaskra}). First choose a canonical basis of $H_1(\RS,\mathbb{Z})$ like the one in Figure \ref{fig_homology}. Such a basis is given by a collection of oriented simple closed curves
\[ a_1,a_2,\ldots, a_g, b_1,b_2,\ldots, b_g\]
which satisfy
\begin{displaymath}
\begin{split}
a_i\cdot a_j=b_i\cdot b_j &= 0 \\
a_i\cdot b_j &= \delta_{ij}
\end{split}
\end{displaymath}
with respect to the intersection form. These curves can also be interpreted as generators of the fundamental group, and in this respect they define a \emph{marking} on the surface in the sense of \cite{imayoshitaniguchi}. The complex structure on the surface $\RS$ together with this marking defines a point in the Teichm\"uller space $\mathcal{T}_g$, cf. \cite{imayoshitaniguchi}.

\begin{figure}[ht]
\centering
\resizebox{.50\textwidth}{!}{\includegraphics{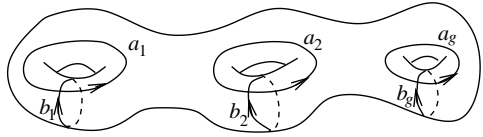}}
\caption{Any marked surface $\RS$ may be canonically identified (up to isotopy) with the surface in the figure above.}
\label{fig_homology}
\end{figure}

We recall from \cite{farkaskra} that given our marked Riemann surface $\RS$, there exists a unique set of holomorphic $1$-forms $\zeta_1,\zeta_2,\ldots, \zeta_g$ on $\RS$ such that
\[ \int _{a_j}\zeta_k dz=\delta_{jk}. \]
The matrix $\Pi\in M_g(\mathbb{C})$ whose entries are
\[ \pi_{jk}=\int_{b_j}\zeta_k dz \]
is symmetric with positive definite imaginary part.

\begin{definition}
The Jacobian variety $\jacob{\RS}$ of the marked Riemann surface $\RS$ is the quotient
\[ \jacob{\RS}:= \mathbb{C}^g/L\]
of $\mathbb{C}^g$ by the lattice subgroup
\[ L:=\{ t_1\lambda_1+\cdots+t_{2g}\lambda_{2g}: \ t_1,\ldots,t_{2g}\in\mathbb{Z}\} \]
that is spanned by the columns $\lambda_i$ of the $g \times 2g$ matrix
\[ \lambda:=(I_g,\Pi), \]
which we call the \emph{period matrix}.
\end{definition}

\begin{remark}
This identifies $\jacob{\RS}$ as an abelian variety whose complex structure depends upon the matrix $\Pi$. Changing the marking on $\RS$ whilst leaving the complex structure of $\RS$ fixed gives rise to a biholomorphic Jacobian variety, albeit with a different period matrix.
\end{remark}

The period matrix $\lambda$ gives rise to an invertible $\mathbb{R}$-linear map
\begin{displaymath}
\begin{array}{rcl}
\mathbb{R}^{2g}=\mathbb{R}^g\times\mathbb{R}^g & \to & \mathbb{C}^g \\
(x,y) & \mapsto & \lambda (x, y)^T = x + \Pi y
\end{array}
\end{displaymath}
which descends to a diffeomorphism
\begin{equation} \label{eqn_realcoord}
\mathbb{R}^{2g}/\mathbb{Z}^{2g} \approx \mathbb{C}^g/L=\jacob{\RS}.
\end{equation}
Using this system of real coordinates on $\jacob{\RS}$, we may canonically and unambiguously define a symplectic form
\begin{eqnarray*}
\omega=\sum_{j=1}^g dx_j\wedge dy_j
\end{eqnarray*}
on $\jacob{\RS}$. This symplectic form allows us to identify $\jacob{\RS}$ with the phase space of a classical mechanical system, and to address questions about quantization.

Choose Planck's constant to be $h=\frac{1}{N}$, where $N$ is a positive \emph{even} integer. The Hilbert space of our quantum theory is obtained by applying the procedure of geometric quantization \cite{sniatycki}, \cite{woodhouse}. It may be constructed as the space of holomorphic sections of a certain holomorphic line bundle over the Jacobian variety $\jacob{\RS}$ which is obtained as the tensor product of a line bundle with curvature $-\frac{2\pi i}{h}\omega=-2\pi i N\omega$ and a half-density. Such sections may be identified, in an obvious manner, with holomorphic functions on $\mathbb{C}^g$ satisfying certain periodicity conditions determined by the period matrix $\lambda$, leading to the following definition.

\begin{definition}
The space of classical theta functions $\spacetheta(\RS)$ of the marked Riemann surface $\RS$ is the vector space consisting of all holomorphic functions $f:\mathbb{C}^g\to\mathbb{C}$ satisfying the periodicity conditions
\begin{displaymath}
\begin{split}
f(z+\lambda_j) &= f(z), \\
f(z+\lambda_{g+j}) &= e^{-2\pi iNz_j-\pi i N\pi_{jj}}f(z);
\end{split}
\end{displaymath}
for $j=1,\ldots,g$; where $\lambda_i$ denotes the $i$th column of the period matrix $\lambda$. The space of classical theta functions may be given the structure of a Hilbert space by endowing it with the inner product
\begin{equation} \label{eqn_hilbertform}
\langle f,g \rangle := (2N)^{\frac{g}{2}} (\det\Pi_{\mathrm{I}})^{\frac{1}{2}} \int_{[0,1]^{2g}} f(x,y)\overline{g(x,y)} e^{-2\pi N y^T \Pi_{\mathrm{I}} y} dx dy,
\end{equation}
where $\Pi_{\mathrm{I}}\in M_g(\mathbb{R})$ denotes the imaginary part of $\Pi$.
\end{definition}

The space of theta functions depends only on the complex structure of $\RS$ and not on the marking, in the sense that choosing a different marking on $\RS$ yields an isomorphic Hilbert space. However, the marking on $\RS$ specifies a particular orthonormal basis consisting of the theta series
\[ \theta_\mu^{\Pi}(z):=\sum_{n\in {\mathbb Z}^g}e^{2\pi i N[\frac{1}{2}\left(\frac{\mu}{N}+n\right)^T \Pi \left(\frac{\mu}{N}+n\right)+\left(\frac{\mu}{N}+n\right)^Tz]}, \quad \mu\in \mathbb{Z}_N^g.\]
Hence each point in $\mathcal{T}_g$ gives rise to a space of theta functions endowed with a preferred basis.

Given $p,q\in \mathbb{Z}^{g}$, consider the exponential function
\begin{displaymath}
\begin{array}{ccc}
\jacob{\RS} & \to & \mathbb{C} \\
(x,y) & \mapsto & e^{2\pi i (p^T x + q^T y)}
\end{array}
\end{displaymath}
defined on $\jacob{\RS}$ in the coordinates \eqref{eqn_realcoord}. Applying the Weyl quantization procedure in the momentum representation \cite{andersen}, \cite{gelcauribe1} to such a function, one obtains the operator
\begin{equation} \label{eqn_operator}
O_{pq}:= \op\left(e^{2\pi i (p^T x + q^T y)}\right):\spacetheta(\RS) \to \spacetheta(\RS)
\end{equation}
that acts on the theta series by
\[ O_{pq}[\theta_\mu^\Pi] = e^{-\frac{\pi i}{N}p^Tq-\frac{2\pi i}{N}\mu^Tq}\theta_{\mu+p}^\Pi. \]

\begin{definition}
Given a nonnegative integer $g$, the \emph{Heisenberg group} $\heis(\mathbb{Z}^g)$ is the group
\[ \heis(\mathbb{Z}^g):=\{(p,q,k): \; p,q\in\mathbb{Z}^g, k\in \mathbb{Z}\} \]
with underlying multiplication
\begin{eqnarray*}
(p,q,k)(p',q',k')=\left(p+p',q+q',k+k'+\sum_{j=1}^g\left|
\begin{array}{rr}
p_j& q_j\\
p'_j&q'_j
\end{array}
\right|
\right)
\end{eqnarray*}

Given an even integer $N$, the \emph{finite Heisenberg group} $\heis(\mathbb{Z}_N^g)$ is the quotient of the Heisenberg group $\heis(\mathbb{Z}^g)$ by the normal subgroup consisting of all elements of the form
\[ (p,q,2k)^N=(Np,Nq,2Nk); \quad p,q\in\mathbb{Z}^g, k\in\mathbb{Z}. \]
\end{definition}

\begin{remark} \label{rem_extform}
The finite Heisenberg group is a $\mathbb{Z}_{2N}$-extension of $\mathbb{Z}_N^{2g}$, and consequently has order $2N^{2g+1}$. One should point out that the group $\heis(\mathbb{Z}^g)$ can be interpreted as the $\mathbb{Z}$-extension of
\begin{equation} \label{eqn_canbasis}
\begin{array}{ccc}
H_1(\RS,\mathbb{Z}) & = & \mathbb{Z}^g \times \mathbb{Z}^g \\
\sum_{i=1}^g (p_i a_i + q_i b_i) & \leftrightharpoons & (p,q)
\end{array}
\end{equation}
by the cocycle defined by the intersection form.
\end{remark}

The operators \eqref{eqn_operator} generate a subgroup of the group $U\left(\spacetheta(\RS)\right)$ of unitary operators on $\spacetheta(\RS)$, which may be identified with the finite Heisenberg group as follows.

\begin{proposition}
Given a marked Riemann surface $\RS$ of genus $g$ and a positive even integer $N$, the subgroup $G$ of the group of unitary operators on $\spacetheta(\RS)$ that is generated by all operators of the form
\[ O_{pq}=\op\left(e^{2\pi i (p^T x + q^T y)}\right); \quad p,q \in \mathbb{Z}^g \]
is isomorphic to the finite Heisenberg group $\heis(\mathbb{Z}_N^g)$:
\begin{equation} \label{eqn_schrorep}
\begin{array}{ccc}
\heis(\mathbb{Z}_N^g) & \cong & G \\
(p,q,k) & \mapsto & e^{\frac{k\pi i}{N}} O_{pq}
\end{array}
\end{equation}
\end{proposition}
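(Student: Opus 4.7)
The plan is to construct the isomorphism by first extending the prescribed assignment to a homomorphism out of the infinite Heisenberg group $\heis(\mathbb{Z}^g)$, then showing that it descends to $\heis(\mathbb{Z}_N^g)$, and finally verifying injectivity. Everything is driven by the explicit action formula
\[ O_{pq}[\theta_\mu^\Pi]=e^{-\frac{\pi i}{N}p^Tq-\frac{2\pi i}{N}\mu^Tq}\,\theta_{\mu+p}^\Pi \]
stated above, together with two elementary facts: the theta series $\{\theta_\mu^\Pi\}_{\mu\in\mathbb{Z}_N^g}$ form an orthonormal basis of $\spacetheta(\RS)$, and $\theta_\mu^\Pi$ depends on $\mu$ only modulo $N$.

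The first step is to compute $O_{pq}\circ O_{p'q'}$ on the basis; collecting the exponential factors yields the Weyl-type commutation relation
\[ O_{pq}\circ O_{p'q'}=e^{\frac{\pi i}{N}(p^Tq'-q^Tp')}\,O_{p+p',\,q+q'}. \]
Comparing this with the Heisenberg group law shows immediately that the map $\phi:(p,q,k)\mapsto e^{k\pi i/N}O_{pq}$ is a homomorphism $\heis(\mathbb{Z}^g)\to U(\spacetheta(\RS))$ whose image is exactly $G$. To see that $\phi$ vanishes on the normal subgroup generated by the elements $(Np,Nq,2Nk)$, one uses $\theta_{\mu+Np}^\Pi=\theta_\mu^\Pi$ together with $\mu^T(Nq)\in\mathbb{Z}$ to reduce $\phi(Np,Nq,2Nk)$ to multiplication by the scalar $e^{-\pi i Np^Tq}$, which equals $1$ precisely because $N$ is even.

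For injectivity, suppose $\phi(p,q,k)=\id$. Basis independence applied to the shift $\mu\mapsto\mu+p$ forces $p\equiv 0\pmod N$, and then the $\mu$-dependence of the residual scalar $e^{-2\pi i\mu^Tq/N}$ forces $q\equiv 0\pmod N$. Writing $p=Np'$ and $q=Nq'$ and equating the remaining constant scalar to $1$ gives $k\equiv N^2(p')^Tq'\pmod{2N}$; since $N$ is even we have $N^2\in 2N\mathbb{Z}$, and therefore $k\equiv 0\pmod{2N}$. Hence $\ker\phi$ is exactly the normal subgroup being quotiented out, so the induced map $\heis(\mathbb{Z}_N^g)\to G$ is the required isomorphism. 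The computation is essentially routine; the only subtlety — and the real content of the proposition — is the careful bookkeeping of the parity conditions on the central coordinate, which is why the statement restricts to even $N$.
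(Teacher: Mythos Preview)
Your proof is correct and is precisely the ``simple check'' that the paper alludes to (the paper itself does not write out any details, deferring instead to Proposition~2.3 of \cite{gelcauribe1}). Your careful handling of the parity bookkeeping in the injectivity step --- in particular, the observation that $N^{2}\in 2N\mathbb{Z}$ when $N$ is even --- is exactly the point where the hypothesis on $N$ enters.
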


\begin{proof}
The proof of this proposition is a simple check, see Proposition 2.3 of \cite{gelcauribe1}.
\end{proof}

The representation of $\heis(\mathbb{Z}_N^g)$ on the space of theta functions $\spacetheta(\RS)$ defined by \eqref{eqn_schrorep}, which we refer to as the \emph{Schr\"odinger representation}, was first discovered by A. Weil \cite{weil} by examining translations in the line bundle over the Jacobian. Like the Schr\"odinger representation of the Heisenberg group with real entries, it  satisfies a Stone-von Neumann theorem.

\begin{theorem} \label{thm_stone}
Any irreducible unitary representation of the finite Heisenberg group in which the element $(0,0,1) \in \heis(\mathbb{Z}_N^g)$ acts as multiplication by the scalar $e^{\frac{\pi i}{N}}$ is unitarily equivalent to the Schr\"odinger representation \eqref{eqn_schrorep}.
\end{theorem}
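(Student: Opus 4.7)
The plan is to run the classical Stone--von Neumann argument in the finite setting. Let $(\rho,V)$ be an irreducible unitary representation of $\heis(\mathbb{Z}_N^g)$ with $\rho(0,0,1)=e^{\pi i/N}\,\mathrm{Id}$. First I would observe that the subgroup $B$ generated by all elements $(0,q,0)$ together with the center is abelian (a direct check from the multiplication law), so that $V$ decomposes as an orthogonal direct sum of joint eigenspaces for $\rho|_B$. The central character is fixed by the hypothesis, and since the relations of $\heis(\mathbb{Z}_N^g)$ include $(0,Nq,0)=e$, the characters of $B$ extending it are parametrized by $\mu\in\mathbb{Z}_N^g$ via $\rho(0,q,0)|_{V_\mu}=e^{-2\pi i\mu^T q/N}\,\mathrm{Id}$. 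Write $V=\bigoplus_\mu V_\mu$ for this decomposition.

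Computing the commutator $(p,0,0)(0,q,0)(p,0,0)^{-1}(0,q,0)^{-1}=(0,0,2p^T q)$ directly from the group law yields the Weyl relation
\[\rho(p,0,0)\rho(0,q,0)=e^{2\pi i p^T q/N}\,\rho(0,q,0)\rho(p,0,0),\]
from which one reads off $\rho(p,0,0)\colon V_\mu\to V_{\mu+p}$. The pairing $(p,q)\mapsto e^{2\pi i p^T q/N}$ on $\mathbb{Z}_N^g\times\mathbb{Z}_N^g$ is non-degenerate, so the operators $\rho(p,0,0)$ permute the set of eigenspaces transitively; consequently every $V_\mu$ is non-zero and the common dimension is some integer $d$. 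For any non-zero unit vector $v_0\in V_0$, the span of $\{\rho(p,0,0)v_0\}_{p\in\mathbb{Z}_N^g}$ hits every eigenspace and is stable under both families of generators (under the $\rho(0,q,0)$ by the eigenvector property), hence is all of $V$ by irreducibility. This forces $d=1$ and $\dim V=N^g$.

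To identify $\rho$ with the Schr\"odinger representation, set $v_\mu:=\rho(\mu,0,0)v_0$. These vectors form an orthonormal basis of $V$, with $\rho(p,0,0)v_\mu=v_{\mu+p}$ by construction (since $(p,0,0)(\mu,0,0)=(p+\mu,0,0)$) and $\rho(0,q,0)v_\mu=e^{-2\pi i\mu^T q/N}v_\mu$ by the Weyl relation. These formulas match the explicit action of the generators $O_{p,0}$ and $O_{0,q}$ on the theta basis $\{\theta_\mu^\Pi\}$ recorded before the theorem, so the linear map $v_\mu\mapsto\theta_\mu^\Pi$ intertwines $\rho$ with the Schr\"odinger representation on a generating set, hence on all of $\heis(\mathbb{Z}_N^g)$. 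Being a bijection between orthonormal bases, this map is unitary.

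The substantive work lies almost entirely in the second paragraph: showing that the transitive action on eigenspaces combined with irreducibility forces each $V_\mu$ to be one-dimensional. The identification with the Schr\"odinger representation in the third paragraph is then essentially a matter of checking that the phase conventions implicit in the definition $v_\mu:=\rho(\mu,0,0)v_0$ are compatible with the formula defining $O_{pq}$; no further choice is required.
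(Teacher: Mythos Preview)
Your argument is correct and is precisely the standard Stone--von Neumann proof that the paper alludes to; the paper itself gives no details, simply deferring to Theorem~2.4 of \cite{gelcauribe1}. The eigenspace decomposition under the maximal abelian subgroup, the transitive permutation of eigenspaces via the Weyl commutation relation, and the dimension count from irreducibility are exactly the expected steps, and your verification that the basis $v_\mu:=\rho(\mu,0,0)v_0$ matches the theta basis under $O_{p,0}$ and $O_{0,q}$ is clean.
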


\begin{proof}
The proof is standard, see Theorem 2.4 of \cite{gelcauribe1}.
\end{proof}

This Stone-von Neumann theorem provides a reason for the existence of the action of the mapping class group on theta functions, whose discovery can be traced back to the nineteenth century. An element $h$ of the mapping class group $\mcg{\RS}$ of $\RS$ induces a linear endomorphism $h_*$ of $H_1(\RS,\mathbb{Z})$ preserving the intersection form. By Remark \ref{rem_extform}, this endomorphism gives rise to an automorphism
\begin{equation} \label{eqn_mcginfheisact}
\begin{array}{ccc}
\heis(\mathbb{Z}^g) & \to & \heis(\mathbb{Z}^g) \\
((p,q),k) & \mapsto & (h_*(p,q),k)
\end{array}
\end{equation}
of the Heisenberg group. This automorphism descends to an automorphism of the finite Heisenberg group $\heis(\mathbb{Z}_N^g)$, yielding an action of the mapping class group of $\RS$ on $\heis(\mathbb{Z}_N^g)$;
\begin{equation} \label{eqn_mcgheisact}
\begin{array}{ccc}
\mcg{\RS} & \to & \Aut\left(\heis(\mathbb{Z}_N^g)\right) \\
h & \mapsto & \tilde{h}:((p,q),k) \mapsto (h_*(p,q),k)
\end{array}
\end{equation}
Pulling the Schr\"odinger representation \eqref{eqn_schrorep} back via the automorphism $\tilde{h}$ yields another representation
\[ (p,q,k) \mapsto e^{\frac{k\pi i}{N}}O_{h_*(p,q)} \]
of the finite Heisenberg group $\heis(\mathbb{Z}_N^g)$. By Theorem \ref{thm_stone}, this representation is unitarily equivalent to the Schr\"odinger representation, and therefore there exists a unitary map
\[ \rho(h):\spacetheta(\RS)\to\spacetheta(\RS), \]
satisfying the \emph{exact Egorov identity}
\[ e^{\frac{k\pi i}{N}}O_{h_*(p,q)}=\rho(h)\circ\left(e^{\frac{k\pi i}{N}} O_{pq}\right)\circ\rho(h)^{-1}; \quad p,q\in\mathbb{Z}^g, k\in\mathbb{Z}. \]
By Schur's Lemma, $\rho(h)$ is well-defined up to multiplication by a complex scalar of unit modulus. Consequently, this construction yields a projective unitary representation
\begin{displaymath}
\begin{array}{ccc}
\mcg{\RS} & \to & U\left(\spacetheta(\RS)\right)/U(1) \\
h & \mapsto & \rho(h)
\end{array}
\end{displaymath}
of the mapping class group on the space of theta functions known as the \emph{Hermite-Jacobi action}. The maps $\rho(h)$ can be described as \emph{discrete Fourier transforms} as we will explain below.

The Schr\"odinger representation can be obtained as an induced representation, and this allows us to relate theta functions to knots without the use of Witten's quantum field theoretic insights.

Consider the submodule
\[ \mathbf{L}:=\{(0,q):q\in\mathbb{Z}^g\} \]
of $\mathbb{Z}^g\times\mathbb{Z}^g$ that is isotropic with respect to the intersection pairing induced by \eqref{eqn_canbasis} and let
\[ \tilde{\mathbf{L}}:=\{(p,q,k)\in\heis(\mathbb{Z}^g):(p,q)\in\mathbf{L}\} \]
be the corresponding maximal abelian subgroup of $\heis(\mathbb{Z}^g)$. Denote the image of $\tilde{\mathbf{L}}$ in $\heis(\mathbb{Z}_N^g)$ under the natural projection by $\tilde{\mathbf{L}}_N$. Being abelian, this group has only one-dimensional irreducible representations, which are therefore characters. In view of the Stone-von Neumann theorem, one chooses the character
\[ \chi_{\mathbf{L}}:\tilde{\mathbf{L}}_N \to \mathbb{C} \]
given by $\chi_{\mathbf{L}}(p,q,k):=e^{\frac{k\pi i}{N}}$.

Consider the group algebras $\mathbb{C}[\heis(\mathbb{Z}_N^g)]$ and $\mathbb{C}[\tilde{\mathbf{L}}_N]$. Note that the latter acts on $\mathbb{C}$ by the character $\chi_{\mathbf{L}}$. The induced representation is
\begin{eqnarray*}
\mbox{Ind}_{\tilde{{\mathbf{L}}}_N}^{\heis(\mathbb{Z}_N^g)}=\mathbb{C}[\heis(\mathbb{Z}_N^g)] \otimes_{\mathbb{C}[\tilde{\mathbf{L}}_N]} \mathbb{C},
\end{eqnarray*}
with $\heis(\mathbb{Z}_N^g)$ acting on the left in the first factor of the tensor product.

The vector space of the representation can be described as the complex vector space, which we denote by $\mathcal{H}_{N,g}(\mathbf{L})$, that is obtained as the quotient of $\mathbb{C}[\heis(\mathbb{Z}_N^g)]$ by the relations
\begin{equation} \label{eqn_indreprel}
\chi_{\mathbf{L}}(u_1)u = uu_1; \quad u\in\heis(\mathbb{Z}_N^g), u_1\in\tilde{\mathbf{L}}_N.
\end{equation}
Denote the quotient map by
\[ \pi_{\mathbf{L}}:\mathbb{C}[\heis(\mathbb{Z}_N^g)] \to \mathcal{H}_{N,g}(\mathbf{L}). \]
The left regular action of $\heis(\mathbb{Z}_N^g)$ on $\mathbb{C}[\heis(\mathbb{Z}_N^g)]$ descends to an action of $\heis(\mathbb{Z}_N^g)$ on the quotient $\mathcal{H}_{N,g}(\mathbf{L})$ that gives rise to the induced representation.

\begin{proposition} \label{prop_thetalinks}
The map
\begin{equation} \label{eqn_thetalinksiso}
\begin{array}{ccc}
\spacetheta(\RS) & \to & \mathcal{H}_{N,g}(\mathbf{L}) \\
\theta_\mu^{\Pi} & \mapsto & \pi_{\mathbf{L}}[(\mu,0,0)]
\end{array}
\end{equation}
defines an $\heis(\mathbb{Z}_N^g)$-equivariant $\mathbb{C}$-linear isomorphism between the space of theta functions $\spacetheta(\RS)$ equipped with the Schr\"odinger representation and the space $\mathcal{H}_{N,g}(\mathbf{L})$ equipped with the left regular action of the finite Heisenberg group.
\end{proposition}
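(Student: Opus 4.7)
The plan is to check both parts of the statement directly on the preferred orthonormal basis $\{\theta_\mu^\Pi : \mu \in \mathbb{Z}_N^g\}$ of $\spacetheta(\RS)$. First I would show that the images $\pi_{\mathbf{L}}[(\mu,0,0)]$ form a basis of $\mathcal{H}_{N,g}(\mathbf{L})$, so that \eqref{eqn_thetalinksiso} is a well-defined $\mathbb{C}$-linear isomorphism; then I would verify $\heis(\mathbb{Z}_N^g)$-equivariance on this basis, using the Heisenberg multiplication law, the action formula $O_{pq}[\theta_\mu^\Pi] = e^{-\frac{\pi i}{N}p^T q -\frac{2\pi i}{N}\mu^T q}\theta_{\mu+p}^\Pi$ recorded just after \eqref{eqn_operator}, and the defining relation \eqref{eqn_indreprel}.

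For the isomorphism, I would first extract a reduction formula from \eqref{eqn_indreprel}. Using the cocycle for $\heis(\mathbb{Z}^g)$, one checks the factorization $(p,0,0)(0,q,k-p^T q) = (p,q,k)$, so the relation \eqref{eqn_indreprel} gives
\[ \pi_{\mathbf{L}}[(p,q,k)] = \chi_{\mathbf{L}}(0,q,k-p^T q)\,\pi_{\mathbf{L}}[(p,0,0)] = e^{\frac{\pi i}{N}(k-p^T q)}\pi_{\mathbf{L}}[(p,0,0)]. \]
In particular the vectors $\pi_{\mathbf{L}}[(\mu,0,0)]$ with $\mu \in \mathbb{Z}_N^g$ span $\mathcal{H}_{N,g}(\mathbf{L})$. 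By Remark \ref{rem_extform} one has $|\heis(\mathbb{Z}_N^g)| = 2N^{2g+1}$, and a direct count of the representatives $(0,q,k)$, $q\in\mathbb{Z}_N^g$, $k\in\mathbb{Z}_{2N}$, gives $|\tilde{\mathbf{L}}_N| = 2N^{g+1}$, so the induced module has dimension $[\heis(\mathbb{Z}_N^g):\tilde{\mathbf{L}}_N]=N^g$. Since this matches $\dim \spacetheta(\RS)$, the spanning set above is actually a basis, and \eqref{eqn_thetalinksiso} sends basis to basis.

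For equivariance, I would evaluate both sides on a generator $(p,q,k)$ and a basis element $\theta_\mu^\Pi$. On the Schr\"odinger side,
\[ \left(e^{\frac{k\pi i}{N}} O_{pq}\right)[\theta_\mu^\Pi] = e^{\frac{\pi i}{N}(k - p^T q - 2\mu^T q)}\theta_{\mu+p}^\Pi, \]
whose image under \eqref{eqn_thetalinksiso} is the same scalar times $\pi_{\mathbf{L}}[(\mu+p,0,0)]$. On the induced-module side, the cocycle gives
\[ (p,q,k)\cdot \pi_{\mathbf{L}}[(\mu,0,0)] = \pi_{\mathbf{L}}\bigl[(p+\mu,\,q,\,k-q^T\mu)\bigr], \]
and applying the reduction formula above simplifies this to $e^{\frac{\pi i}{N}(k-q^T\mu-(p+\mu)^T q)}\pi_{\mathbf{L}}[(p+\mu,0,0)] = e^{\frac{\pi i}{N}(k - p^T q - 2\mu^T q)}\pi_{\mathbf{L}}[(p+\mu,0,0)]$, matching the Schr\"odinger-side computation.

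There is no real conceptual obstacle; the main issue is clerical, namely keeping the 2-cocycle of $\heis(\mathbb{Z}^g)$, the signs, and the reduction of products like $(p,q,k)(\mu,0,0)$ straight through the equivariance calculation. Once these identifications are carried out consistently, both the isomorphism and its equivariance fall out as an immediate consequence of the reduction formula and the explicit Schr\"odinger action on the basis $\{\theta_\mu^\Pi\}$.
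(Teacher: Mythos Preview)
Your argument is correct and is essentially the same as the paper's: the reduction formula $\pi_{\mathbf{L}}[(p,q,k)] = e^{\frac{\pi i}{N}(k-p^Tq)}\pi_{\mathbf{L}}[(p,0,0)]$ that you derive is precisely the content of the inverse map $\pi_{\mathbf{L}}[(p,q,k)]\mapsto e^{\frac{(k-p^Tq)\pi i}{N}}\theta_p^\Pi$ that the paper writes down. Where the paper simply asserts this inverse is well-defined and leaves the rest to the reader, you instead combine the reduction formula with a dimension count and then verify equivariance explicitly; these are just two packagings of the same computation.
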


\begin{proof}
One may easily check that the map
\begin{displaymath}
\begin{array}{ccc}
\mathcal{H}_{N,g}(\mathbf{L}) & \to & \spacetheta(\RS) \\
\pi_{\mathbf{L}}[(p,q,k)] & \mapsto & e^{\frac{(k-p^T q)\pi i}{N}}\theta_{p}^\Pi
\end{array}
\end{displaymath}
describes a well-defined inverse to \eqref{eqn_thetalinksiso}.
\end{proof}

\begin{remark}
Let $h$ be an element of the mapping class group of $\Sigma_g$ and set $\mathbf{L}':=h_*(\mathbf{L})$. As above, we may construct the vector space $\mathcal{H}_{N,g}(\mathbf{L}')$ as the quotient of $\mathbb{C}[\heis(\mathbb{Z}_N^g)]$ by the same relations \eqref{eqn_indreprel}, where $\mathbf{L}$ is replaced by $\mathbf{L}'$ and the formula for the character $\chi_{\mathbf{L}'}$ is the same as that for $\chi_{\mathbf{L}}$.

Consider the map $\mathcal{H}_{N,g}(\mathbf{L})\to \mathcal{H}_{N,g}(\mathbf{L}')$ given by
\begin{eqnarray}\label{eqn_fourier}
\pi_{\mathbf{L}}(\mathbf{u}) \mapsto \frac{1}{\left[\tilde{\mathbf{L}}_N : (\tilde{\mathbf{L}}_N \cap \tilde{\mathbf{L}}'_N)\right]^{\frac{1}{2}}} \sum_{u_1\in \tilde{\mathbf{L}}_N/(\tilde{\mathbf{L}}_N\cap \tilde{\mathbf{L}}'_N)} \chi_{\mathbf{L}}(u_1)^{-1}\pi_{\mathbf{L}'}(\mathbf{u}u_1).
\end{eqnarray}
where $\mathbf{u}\in\mathbb{C}[\heis(\mathbb{Z}_N^g)]$. On the other hand, the automorphism $\tilde{h}$ of $\heis(\mathbb{Z}_N^g)$ defined by \eqref{eqn_mcgheisact} induces a canonical identification
\begin{displaymath}
\begin{array}{ccc}
\mathcal{H}_{N,g}(\mathbf{L}) & \cong & \mathcal{H}_{N,g}(\mathbf{L}') \\
\pi_{\mathbf{L}}(\mathbf{u}) & \mapsto & \pi_{\mathbf{L}'}(\tilde{h}(\mathbf{u}))
\end{array}
\end{displaymath}
Composing \eqref{eqn_fourier} with the inverse of this map yields an endomorphism of $\mathcal{H}_{N,g}(\mathbf{L})$ and consequently, by Proposition \ref{prop_thetalinks}, an endomorphism of $\spacetheta(\RS)$. This endomorphism
is (a unitary representative for) $\rho(h)^{-1}$. It is formula \eqref{eqn_fourier} that identifies $\rho(h)$ as a discrete Fourier transform.
\end{remark}

Denote by $L(\spacetheta(\RS))$ the algebra of $\mathbb{C}$-linear endomorphisms of $\spacetheta(\RS)$. The Schr\"odinger representation \eqref{eqn_schrorep} provides a way to describe this space of linear operators in terms of the finite Heisenberg group $\heis(\mathbb{Z}_N^g)$.

\begin{proposition} \label{prop_thetaops}(Proposition 2.5 in \cite{gelcauribe1})
The quotient of the algebra $\mathbb{C}[\heis(\mathbb{Z}_N^g)]$ by the ideal $I$ generated by the single relation
\[ (0,0,1)=e^{\frac{\pi i}{N}}(0,0,0) \]
is isomorphic, via the Schr\"odinger representation, to the algebra of linear operators $L(\spacetheta(\RS))$:
\begin{displaymath}
\begin{array}{ccc}
\mathbb{C}[\heis(\mathbb{Z}_N^g)]/I & \cong & L(\spacetheta(\RS)) \\
(p,q,k) & \mapsto & e^{\frac{k\pi i}{N}} O_{pq}
\end{array}
\end{displaymath}
\end{proposition}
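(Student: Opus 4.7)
My plan is to extend the group map $\Phi: (p,q,k) \mapsto e^{k\pi i/N}O_{pq}$ by linearity to an algebra homomorphism $\Phi: \mathbb{C}[\heis(\mathbb{Z}_N^g)] \to L(\spacetheta(\RS))$. The fact that $\Phi$ preserves multiplication on the group is already established by the proposition containing \eqref{eqn_schrorep}, where it is shown that the $\Phi(p,q,k)$ generate a copy of $\heis(\mathbb{Z}_N^g)$ inside $U(\spacetheta(\RS))$. Since $\Phi(0,0,1) = e^{\pi i/N}\id = e^{\pi i/N}\Phi(0,0,0)$, the defining element of $I$ lies in $\ker\Phi$, and $\Phi$ descends to a map $\bar\Phi: \mathbb{C}[\heis(\mathbb{Z}_N^g)]/I \to L(\spacetheta(\RS))$.

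The next step is a dimension count. Using the quotient relation together with the group identity $(p,q,k) = (p,q,0)(0,0,k)$ in $\heis(\mathbb{Z}_N^g)$, every class in the quotient is represented by $e^{k\pi i/N}[(p,q,0)]$; the consistency check $(0,0,1)^{2N} = (0,0,2N) = (0,0,0)$ matches $(e^{\pi i/N})^{2N} = 1$, so no further identifications are forced on the family $\{[(p,q,0)]\}_{p,q\in\mathbb{Z}_N^g}$. Hence this family is a basis and $\dim(\mathbb{C}[\heis(\mathbb{Z}_N^g)]/I) = N^{2g}$. On the other side, $\spacetheta(\RS)$ has the orthonormal basis $\{\theta_\mu^\Pi\}_{\mu\in\mathbb{Z}_N^g}$, so $\dim L(\spacetheta(\RS)) = (N^g)^2 = N^{2g}$ as well.

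With source and target of equal finite dimension, it suffices to show $\bar\Phi$ is surjective. The cleanest route is Burnside's theorem: since the Schr\"odinger representation of $\heis(\mathbb{Z}_N^g)$ on $\spacetheta(\RS)$ is irreducible, its linear extension to the group algebra must surject onto $L(\spacetheta(\RS))$. The one genuinely nontrivial point, and therefore the main obstacle, is confirming this irreducibility. It is implicit in Theorem \ref{thm_stone}, but it can also be verified directly from the action on the basis $\{\theta_\mu^\Pi\}$: the operators $O_{0q}$ act diagonally with eigenvalues $e^{-2\pi i \mu^T q/N}$ which separate the basis vectors as $q$ ranges over $\mathbb{Z}_N^g$, while the operators $O_{p0}$ cyclically permute the basis (up to a phase). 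Any invariant subspace is therefore forced to be either $\{0\}$ or all of $\spacetheta(\RS)$, and combining this with the dimension count above yields the claimed isomorphism.
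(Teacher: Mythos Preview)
Your argument is correct. The paper itself does not supply a proof of this proposition; it simply records the statement and cites Proposition~2.5 of \cite{gelcauribe1}. So there is no ``paper's own proof'' to compare against, and what you have written is a perfectly good self-contained justification along standard lines: factor the group-algebra map through the relation, bound the dimension of the quotient, and invoke irreducibility together with Burnside's theorem to get surjectivity.

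One small presentational point: the sentence ``no further identifications are forced on the family $\{[(p,q,0)]\}$, hence this family is a basis'' is asserting linear independence before you have actually established it. The consistency check $(0,0,1)^{2N}=(0,0,0)$ versus $(e^{\pi i/N})^{2N}=1$ only shows that the relation is compatible with the group law, not that the $N^{2g}$ classes are linearly independent. What your argument really gives at that stage is that these classes \emph{span}, so $\dim(\mathbb{C}[\heis(\mathbb{Z}_N^g)]/I)\le N^{2g}$. Linear independence then follows \emph{a posteriori} once you prove surjectivity of $\bar\Phi$ onto the $N^{2g}$-dimensional target. The logic is sound; just reorder the exposition so that the basis claim is stated as a consequence of the surjectivity, not as an input to it.

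The irreducibility sketch is fine: the operators $O_{0q}$ are simultaneously diagonal on $\{\theta_\mu^\Pi\}$ with joint eigenvalues $\mu\mapsto e^{-2\pi i\mu^Tq/N}$ that separate the $\mu$'s, forcing any invariant subspace to be a coordinate subspace; the shifts $O_{p0}$ act transitively on the basis, so only the trivial subspaces survive.
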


Thus by Proposition \ref{prop_thetalinks}, the map \eqref{eqn_thetalinksiso} intertwines the Schr\"odinger representation and the left action of the finite Heisenberg group. As explained in \cite{gelcauribe1}, this abstract version of the Schr\"odinger representation has topological flavor, which allows us to model the space of theta functions, the Schr\"odinger representation, and the Hermite-Jacobi action using the skein modules of the linking number introduced by Przytycki in \cite{przytycki2}. Here is how this is done.

For an oriented three-manifold $M$, consider the free $\mathbb{C}[t,t^{-1}]$-module whose basis is the set of isotopy classes of framed oriented links that are contained in the interior of $M$, including the empty link $\emptyset$. Factor this by the $\mathbb{C}[t,t^{-1}]$-submodule spanned by the elements from Figure \ref{fig_skeinrelations}, where the two terms depict framed links that are identical, except in an embedded ball, in which they look as shown and have the blackboard framing; the orientation in Figure \ref{fig_skeinrelations} that is induced by the orientation of $M$ must coincide with the canonical orientation of $\mathbb{R}^3$. In other words, we are allowed to smooth each crossing, provided that we multiply with the appropriate power of $t$, and we are allowed to delete trivial link components. The result of this factorization is called the \emph{linking number skein module} of $M$ and is denoted by $\mathcal{L}_t(M)$. Its elements are called skeins. The name is due to the fact that the skein relations (Figure \ref{fig_skeinrelations}) are used in computing the Gaussian linking number of two curves.

\begin{figure}[ht]
\centering
\resizebox{.50\textwidth}{!}{\includegraphics{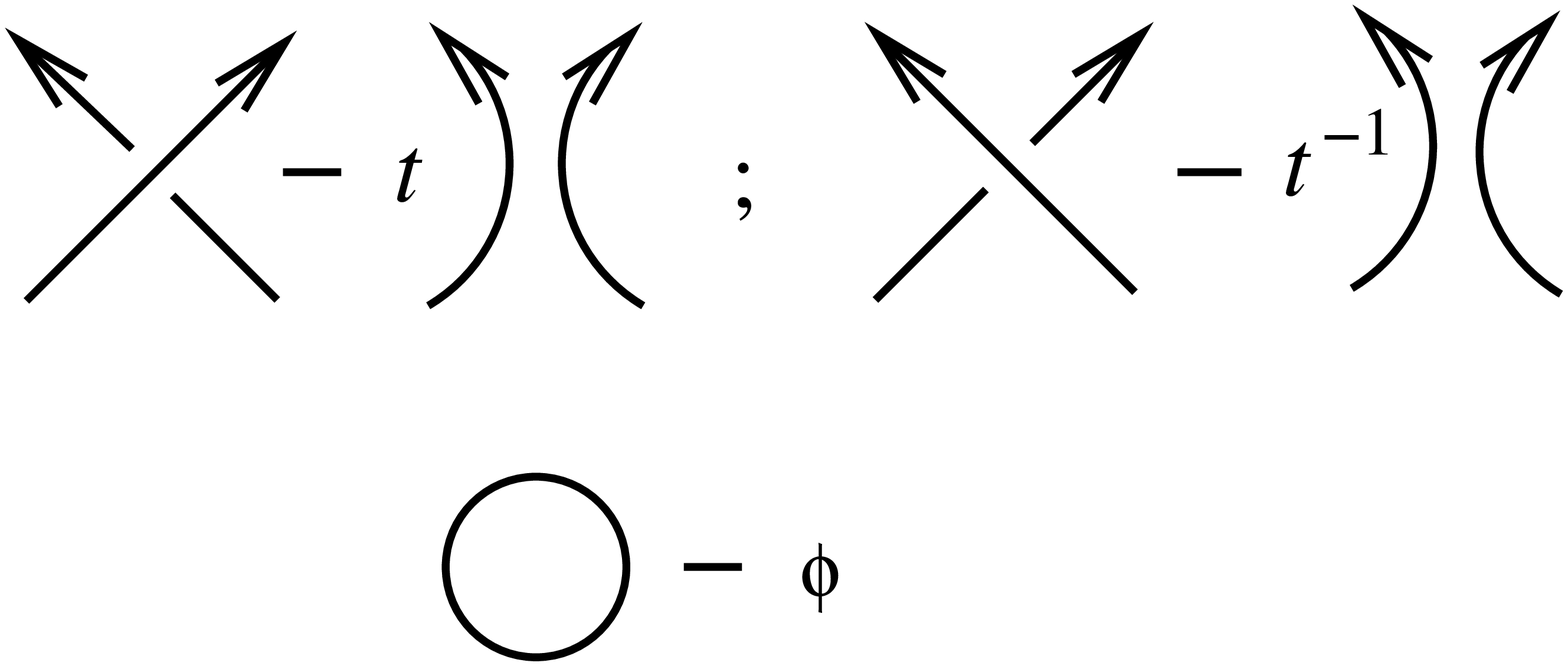}}
\caption{The skein relations defining $\mathcal{L}_t(M)$.}
\label{fig_skeinrelations}
\end{figure}

From the even positive integer $N$, we define the \emph{reduced linking number skein module} of $M$, denoted by $\widetilde{\mathcal{L}}_t(M)$, to be the $\mathbb{C}[t,t^{-1}]$-module obtained as the quotient of $\mathcal{L}_t(M)$ by the relations
\[ t\cdot L=e^{\frac{\pi i}{N}}\cdot L \quad\text{and}\quad \gamma^{\| N}\cup L=L \]
where the first relation holds for all links $L$, and the second relation holds for all oriented framed simple closed curves $\gamma$ and links $L$ disjoint from $\gamma$, and $\gamma^{\| N}$ is the multicurve\footnote{We use this notation to distinguish this from an $N$-fold product, which will appear later.} in a regular neighborhood of $\gamma$ disjoint from $L$ obtained by replacing $\gamma$ by $N$ parallel copies of it (where ``parallel'' is defined using the framing of $\gamma$).

If $M=\RS\times [0,1]$, then the identification
\begin{equation} \label{eqn_surfaceglue}
(\RS\times [0,1])\bigsqcup_{\begin{subarray}{c} \RS\times\{0\} \\ =\RS\times\{1\} \end{subarray}}(\RS\times [0,1]) \approx \RS\times [0,1]
\end{equation}
induces a multiplication of skeins in $\mathcal{L}_t(\RS\times [0,1])$ and $\widetilde{\mathcal{L}}_t(\RS\times [0,1])$ which transforms them into algebras.

Moreover, the identification
\begin{equation} \label{eqn_bdryglue}
(\partial M\times [0,1])\bigsqcup_{\begin{subarray}{c} \partial M\times\{0\} \\ =\partial M \end{subarray}} M\approx M,
\end{equation}
canonically defined up to isotopy, makes $\mathcal{L}_t(M)$ into a $\mathcal{L}_t(\partial M\times [0,1])$-module and $\widetilde{\mathcal{L}}_t(M)$ into a $\widetilde{\mathcal{L}}_t(\partial M\times [0,1])$-module.

We are only interested in the particular situation where the cylinder $\RS\times [0,1]$ is glued to the boundary of the standard handlebody $H_g$ of genus $g$, which we take to be the $3$-manifold that is enclosed by the surface depicted in Figure \ref{fig_homology}. The marking on $\RS$ leads to a canonical (up to isotopy) identification
\[ \partial H_g = \RS \]
making $\mathcal{L}_t(H_g)$ into a $\mathcal{L}_t(\RS\times [0,1])$-module and $\widetilde{\mathcal{L}}_t(H_g)$ into a $\widetilde{\mathcal{L}}_t(\RS\times [0,1])$-module. Note that any link in a cylinder over a surface is skein-equivalent to a link in which every link component is endowed with a framing that is parallel to the surface. Consider the curves
\[ a_1,a_2,\ldots, a_g, b_1,b_2,\ldots, b_g\]
from Figure \ref{fig_homology} that define the marking on $\RS$, equipped with this parallel framing.

Here is a rephrasing of Theorem 4.5 and Theorem 4.7 in \cite{gelcauribe1}.
\begin{theorem} \label{thm_linkingheisenberg}
The algebras $\mathbb{C}[\heis(\mathbb{Z}^g)]$ and $\mathcal{L}_t(\RS\times [0,1])$ are isomorphic, with the isomorphism defined by the map
\begin{equation} \label{eqn_linkheis}
\begin{array}{ccc}
\mathbb{C}[\heis(\mathbb{Z}^g)] & \cong & \mathcal{L}_t(\RS\times [0,1]) \\
(p,q,k) & \mapsto & t^{(k-p^T q)} a_1^{p_1}\cdots a_g^{p_g}b_1^{q_1}\cdots b_g^{q_g}
\end{array}
\end{equation}
This isomorphism is equivariant with respect the action of the mapping class group of $\RS$, where $\mcg{\RS}$ acts on the left by \eqref{eqn_mcginfheisact} and on the right in an obvious fashion.

Furthermore, this isomorphism descends, using Proposition \ref{prop_thetaops}, to an isomorphism of the algebras
\begin{equation} \label{eqn_linkthetarep}
L(\spacetheta(\RS)) \cong \mathbb{C}[\heis(\mathbb{Z}_N^g)]/\left\langle (0,0,1)=e^{\frac{\pi i}{N}}(0,0,0) \right\rangle \cong \widetilde{\mathcal{L}}_t(\RS\times [0,1]).
\end{equation}
\end{theorem}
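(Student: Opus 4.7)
My plan is to construct the first isomorphism directly from the formula \eqref{eqn_linkheis} and obtain the second isomorphism as a quotient. First, I would verify that the formula extends to a well-defined algebra homomorphism $\phi:\mathbb{C}[\heis(\mathbb{Z}^g)]\to\mathcal{L}_t(\RS\times[0,1])$. The only nontrivial check is compatibility with products: stacking the image of $(p,q,k)$ above the image of $(p',q',k')$ produces crossings in $\RS\times[0,1]$ concentrated at the transverse intersection points $a_j\cap b_j$ on $\RS$, and resolving every crossing via the skein relation of Figure \ref{fig_skeinrelations} contributes a factor of $t$ whose total exponent matches the Heisenberg cocycle $\sum_j(p_jq'_j-p'_jq_j)$; the normalization $t^{k-p^Tq}$ is precisely what makes this bookkeeping close up to an algebra homomorphism.

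Next I would establish bijectivity. For surjectivity, any framed oriented link in $\RS\times[0,1]$ projects to a diagram on $\RS$, and iteratively smoothing crossings via the skein relations reduces it (up to an overall Laurent factor in $t$) to a disjoint union of simple closed curves with parallel framing. Commutators in the skein algebra then collapse complicated isotopy types to standard monomials: for instance, the skein class of a separating curve that bounds the first handle is represented by $a_1b_1a_1^{-1}b_1^{-1}$, which by the commutation relation in $\mathcal{L}_t$ equals a pure power of $t$. The outcome is that every simple closed curve with homology class $(p,q)$ reduces to $a_1^{p_1}\cdots b_g^{q_g}$ up to a power of $t$. For injectivity, the skein module carries a natural $H_1(\RS,\mathbb{Z})=\mathbb{Z}^{2g}$-grading because the skein relations preserve total homology class; in each graded piece both sides are free rank-one modules over $\mathbb{C}[t,t^{-1}]$ — the Heisenberg side has the center as its $\mathbb{C}[t,t^{-1}]$-basis, and the skein side has the image of a fixed representative curve — and $\phi$ matches these generators.

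Equivariance under $\mcg{\RS}$ is then almost formal: since both sides are $\mathbb{Z}^{2g}$-graded and $h\in\mcg{\RS}$ acts by the homology action $h_*$ in each grade while leaving the $t$-coordinate fixed, the $t^{k-p^Tq}$ normalization in \eqref{eqn_linkheis} absorbs any discrepancy between $h(a_1^{p_1}\cdots b_g^{q_g})$ and the canonical monomial for $h_*(p,q)$. Finally, to descend to \eqref{eqn_linkthetarep}, I would check that the two reduced-skein relations translate under $\phi$ precisely to $(0,0,1)=e^{\pi i/N}(0,0,0)$ and $(Np,Nq,0)=(0,0,0)$; the latter yields $\mathbb{C}[\heis(\mathbb{Z}_N^g)]$ as a quotient, and together with the former it cuts $\mathbb{C}[\heis(\mathbb{Z}^g)]$ down to the algebra of Proposition \ref{prop_thetaops}. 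The main obstacle is the surjectivity reduction: the claim that all geometric information carried by isotopy classes of simple closed curves on $\RS$ — including the rich variety of separating and non-standard curves — collapses modulo the skein relations to the purely homological data of $\mathbb{Z}^{2g}$ together with a scalar power of $t$, essentially the content of Theorem 4.5 of \cite{gelcauribe1}.
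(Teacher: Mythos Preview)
The paper does not supply a proof of this theorem; it is stated as ``a rephrasing of Theorem~4.5 and Theorem~4.7 in \cite{gelcauribe1}'' with no accompanying argument. Your outline therefore already goes beyond what the paper itself provides, and the overall strategy you describe---verify that \eqref{eqn_linkheis} is multiplicative, reduce arbitrary links to the standard monomials $a_1^{p_1}\cdots b_g^{q_g}$ via the skein relations, and then pass to the reduced quotient---is indeed the approach of \cite{gelcauribe1}, which you correctly identify as the substantive input.

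That said, your injectivity step has a circularity. You assert that each $H_1(\RS,\mathbb{Z})$-graded piece of $\mathcal{L}_t(\RS\times[0,1])$ is a \emph{free} rank-one $\mathbb{C}[t,t^{-1}]$-module, but this is precisely the content of the isomorphism you are proving. Surjectivity only gives that each graded piece is cyclic; to rule out $t$-torsion you need an independent invariant---for instance a well-defined map $\mathcal{L}_t(\RS\times[0,1])\to\mathbb{C}[t,t^{-1}][H_1(\RS,\mathbb{Z})]$ built from homology class and writhe that manifestly respects the skein relations and separates the monomials. Your equivariance argument is also a little glib: knowing that $h\in\mcg{\RS}$ respects the $H_1$-grading and fixes $t$ determines its action on each graded piece only up to a unit in $\mathbb{C}[t,t^{-1}]$, so one still has to verify on generators (e.g.\ that the simple closed curve $h(a_i)$, with surface framing, reduces to the standard monomial for $h_*[a_i]$ with no extra power of $t$). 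This check is part of the surjectivity computation, but it is not automatic from the grading alone.
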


Consider the framed curves
\[ a_1,a_2, \ldots ,a_g \]
on $\RS=\partial H_g$. These curves give rise to framed curves in the handlebody $H_g$ which, by an abuse of notation, we denote in the same way. By \eqref{eqn_linkthetarep}, $\widetilde{\mathcal{L}}_t(H_g)$ is a $L(\spacetheta(\RS))$-module.

\begin{theorem} \label{thm_thetaskein}(Theorem 4.7 in \cite{gelcauribe1}) 
The map
\begin{equation} \label{eqn_thetaskeinmap}
\begin{array}{ccc}
\spacetheta(\RS) & \to & \widetilde{\mathcal{L}}_t(H_g) \\
\theta_\mu^\Pi & \mapsto & a_1^{\mu_1}\cdots a_g^{\mu_g}
\end{array}
\end{equation}
is an isomorphism of $L(\spacetheta(\RS))$-modules.
\end{theorem}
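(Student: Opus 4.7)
My plan is to realize the map \eqref{eqn_thetaskeinmap} as the composite
\[
\spacetheta(\RS) \xrightarrow{\sim} \mathcal{H}_{N,g}(\mathbf{L}) \xrightarrow{\Phi} \widetilde{\mathcal{L}}_t(H_g),
\]
in which the first arrow is the equivariant isomorphism from Proposition~\ref{prop_thetalinks} and $\Phi$ is the natural module map induced by the empty skein. By Theorem~\ref{thm_linkingheisenberg} together with Proposition~\ref{prop_thetaops}, the boundary gluing \eqref{eqn_bdryglue} makes $\widetilde{\mathcal{L}}_t(H_g)$ into a $\mathbb{C}[\heis(\mathbb{Z}_N^g)]$-module whose action factors through $L(\spacetheta(\RS))$. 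I first define $\Phi$ on the group algebra as the orbit map of the empty skein $\emptyset$ under this action, and then check that it descends to the quotient $\mathcal{H}_{N,g}(\mathbf{L})$.

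Since the orbit map is tautologically $\heis(\mathbb{Z}_N^g)$-equivariant, to check the defining relation $u_1\cdot u = \chi_{\mathbf{L}}(u_1)\, u$ of $\mathcal{H}_{N,g}(\mathbf{L})$ it is enough to verify it on the vacuum, for $u_1=(0,q,k)\in\tilde{\mathbf{L}}_N$. This collapses to the single skein-theoretic identity
\[
b_1^{q_1}\cdots b_g^{q_g}\cdot\emptyset = \emptyset \quad\text{in } \widetilde{\mathcal{L}}_t(H_g),
\]
which is the pivotal geometric input: each $b_j$ bounds a meridian disk in the standard handlebody $H_g$, the disks for different $j$ may be chosen pairwise disjoint, and pushing $b^{q}$ off the collar into $H_g$ along these disks produces a disjoint union of framed unknots whose parallel framings agree with the disk framings, so each unknot is removed in turn by the skein relation deleting trivial components.

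Once $\Phi$ is well-defined, the composite sends $\theta_\mu^\Pi\mapsto\pi_{\mathbf{L}}[(\mu,0,0)]\mapsto a_1^{\mu_1}\cdots a_g^{\mu_g}$, which is precisely \eqref{eqn_thetaskeinmap}. Bijectivity then follows from a Stone--von Neumann-type argument: the composite is $L(\spacetheta(\RS))$-equivariant by construction, and because $L(\spacetheta(\RS))$ is a full matrix algebra with unique simple module $\spacetheta(\RS)$, every nonzero equivariant map out of $\spacetheta(\RS)$ is automatically injective. Non-vanishing of the map is ensured by the nontriviality of $\emptyset$ in $\widetilde{\mathcal{L}}_t(H_g)$, which in turn follows from the faithful action of the nonzero algebra $\widetilde{\mathcal{L}}_t(\RS\times[0,1])$ on the cyclic submodule it generates from $\emptyset$. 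For surjectivity I appeal to the standard skein-theoretic fact that every framed link in $H_g$ can be isotoped, after resolving crossings via the skein relations, into a collar of $\partial H_g$, so that $\widetilde{\mathcal{L}}_t(H_g)$ is cyclic over $\widetilde{\mathcal{L}}_t(\RS\times[0,1])$ with $\emptyset$ as generator.

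The principal obstacle is the skein-theoretic identity $b^q\cdot\emptyset=\emptyset$: one must confirm that the framings carried by the $b_j$'s in the collar are compatible with the flat framings supplied by the meridian disks inside $H_g$ and that these disks remain pairwise disjoint when pushed in, so that the reduction to trivial unknots proceeds component by component. The surjectivity step, while standard in skein theory, also rests on a non-trivial isotopy argument for framed links in $H_g$. Everything else is a direct manipulation of the algebraic identifications established earlier in the section.
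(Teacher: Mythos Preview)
The paper does not actually prove this theorem; it simply defers to Theorem~4.7 of \cite{gelcauribe1}. So your proposal is not so much a different route as it is an actual proof where the paper supplies none. Your strategy---factoring through the induced representation $\mathcal{H}_{N,g}(\mathbf{L})$ via the orbit map of $\emptyset$, reducing well-definedness to the geometric fact that the $b_j$ bound disjoint meridian disks in $H_g$, and then invoking simplicity of $\spacetheta(\RS)$ as an $L(\spacetheta(\RS))$-module for injectivity and cyclicity of $\emptyset$ for surjectivity---is sound and is essentially how one would unpack the cited argument.

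Two small points. First, the defining relation of $\mathcal{H}_{N,g}(\mathbf{L})$ in \eqref{eqn_indreprel} is $\chi_{\mathbf{L}}(u_1)u = uu_1$ (right multiplication by $u_1$), not $u_1\cdot u=\chi_{\mathbf{L}}(u_1)u$ as you wrote; however, your reduction to $b_1^{q_1}\cdots b_g^{q_g}\cdot\emptyset=\emptyset$ is correct, since $(uu_1)\cdot\emptyset=u\cdot(u_1\cdot\emptyset)$ and $(0,q,k)$ corresponds to $t^k b_1^{q_1}\cdots b_g^{q_g}$ under \eqref{eqn_linkheis}. Second, your non-vanishing argument is circular as stated: a nonzero algebra can act on the zero module, so knowing $\widetilde{\mathcal{L}}_t(\RS\times[0,1])\neq 0$ does not by itself prevent $\emptyset=0$. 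The clean fix is to use the Heegaard pairing \eqref{eqn_pairingskein}: gluing $\emptyset\in\widetilde{\mathcal{L}}_t(H_g)$ to $\emptyset$ in the complementary handlebody produces the empty link in $S^3$, whose value is $1\in\mathbb{C}$, so $\emptyset\neq 0$ in $\widetilde{\mathcal{L}}_t(H_g)$.
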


\begin{proof}
See Theorem 4.7 of \cite{gelcauribe1}.
\end{proof}

Consider the orientation reversing diffeomorphism
\begin{equation} \label{eqn_heegaardmap}
\begin{array}{ccc}
\partial H_g & \approx & \partial H_g \\
a_i & \mapsto & b_i \\
b_i & \mapsto & a_i
\end{array}
\end{equation}
of the boundary of the standard handlebody $H_g$ that is canonically determined (up to isotopy) by the above action on the marking depicted in Figure \ref{fig_homology}. This identification gives rise to a Heegaard splitting
\[ H_g \bigsqcup_{\partial H_g \approx \partial H_g} H_g = S^3 \]
of $S^3$ in which the leftmost handlebody corresponds to the interior of Figure \ref{fig_homology} and the rightmost handlebody corresponds to the exterior. In turn, this Heegaard splitting defines a pairing
\begin{equation}\label{eqn_pairingskein}
[\cdot,\cdot ]:\spacetheta(\RS)\otimes \spacetheta(\RS)\cong \widetilde{\mathcal{L}}_t(H_g)\otimes\widetilde{\mathcal{L}}_t(H_g) \to \widetilde{\mathcal{L}}_t(S^3)=\mathbb{C},
\end{equation}
where we make use of Theorem \ref{thm_thetaskein} and the fact\footnote{Note that in $S^3$, the relation $\gamma^{\| N}\cup L = L$ is redundant, as using the skein relations of Figure \ref{fig_skeinrelations}, any $N$-fold multicurve may be disentangled from any link and transformed into a union of unknots, which may then be deleted.} that $\widetilde{\mathcal{L}}_t(S^3)$ is generated by the unknot. One may compute directly that
\begin{eqnarray*}
[\theta_\mu^\Pi,\theta_{\mu'}^\Pi]=t^{-2\mu^T\mu'}; \quad \mu,\mu'\in\mathbb{Z}_N^g.
\end{eqnarray*}
Since $t=e^{\frac{\pi i}{N}}$ is a primitive $2N$th root of unity, it follows from this formula that this pairing is non-degenerate (the inverse matrix is $\frac{1}{N^g}t^{2\mu^T\mu'}$). It is important to point out that this pairing is not the inner product \eqref{eqn_hilbertform}.

Let us now turn to the Hermite-Jacobi action. Recall that any element $h$ of the mapping class group of $\RS$ can be represented as surgery on a framed link in $\RS\times [0,1]$. Suppose that $L$ is a framed link in $\RS\times [0,1]$ such that the $3$-manifold $K$ that is obtained from $\RS\times [0,1]$ by surgery along $L$ is diffeomorphic to $\RS\times [0,1]$ by a diffeomorphism (where $K$ is the domain and $\RS\times [0,1]$ is the codomain) that is the identity on $\RS\times \{1\}$ and $h\in\mcg{\RS}$ on $\RS\times \{0\}$. Of course, not all links have this property, but for those that do, the diffeomorphism $h_L:=h$ is well-defined up to isotopy.

In particular, if $T$ is a simple closed curve on $\RS$, then a Dehn twist along $T$ is represented by the curve $T^+$ that is obtained from $T$ by endowing $T$ with the framing that is parallel to the surface and placing a single positive twist in $T$ (here, our convention for a Dehn Twist is such that a Dehn twist along the curve $b_1$ in Figure \ref{fig_homology} maps $a_1\in H_1(\RS)$ to $a_1+b_1$). The inverse twist is represented by the curve $T^-$ obtained by placing a negative twist in $T$.

By Theorem \ref{thm_linkingheisenberg}, any linear operator on $\spacetheta(\RS)$ may be uniquely represented by a skein in $\widetilde{\mathcal{L}}_t(\RS\times [0,1])$. To describe the skein associated to the discrete Fourier transform $\rho(h)$, we need the following definition.

\begin{definition}
Denote by $\Omega$ the skein in the solid torus depicted in Figure \ref{fig_omega} multiplied by $N^{-\frac{1}{2}}$.
\begin{figure}[ht]
\centering
\scalebox{.30}{\includegraphics{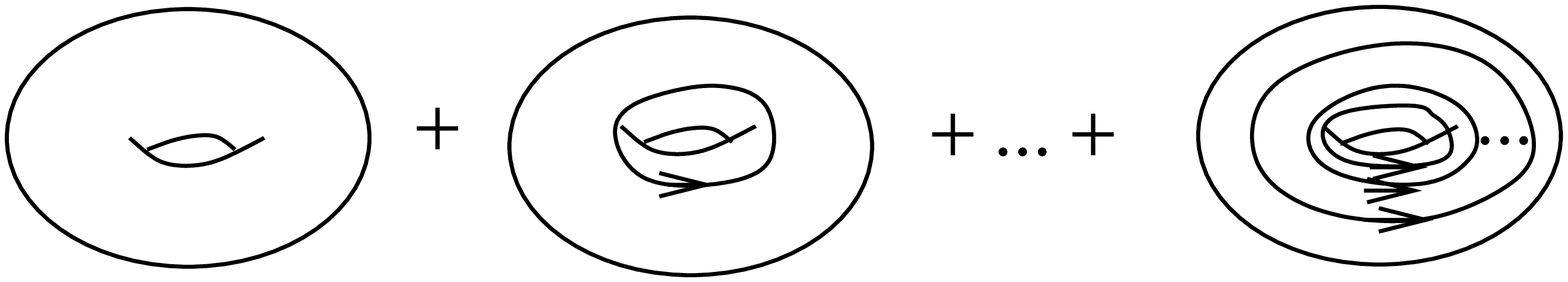}}
\caption{The skein $\Omega$. The sum has $N$ terms.}
\label{fig_omega}
\end{figure}

For a framed link $L$ in a 3-manifold $M$, denote by $\Omega(L)\in\mathcal{L}_t(M)$ the skein that is obtained by replacing each component of $L$ by $\Omega$ using the framing on $L$. More specifically, if $L$ is the disjoint union
\[ L=L_1\cup L_2\cup\cdots\cup L_m \]
of closed framed curves $L_1,\ldots,L_m$, then
\[ \Omega(L):=\frac{1}{N^{\frac{m}{2}}}\sum_{i_1,\ldots,i_m=0}^{N-1} L_1^{\| i_1}\cup L_2^{\| i_2}\cup\cdots\cup L_m^{\| i_m} \]
Note that $\Omega(L)$ is independent of the orientation of $L$.
\end{definition}

\begin{remark}
Given an oriented $3$-manifold $M$, let $\widehat{\mathcal{L}}_t(M)$ denote the free $\mathbb{C}[t,t^{-1}]$-module generated by isotopy classes of framed oriented links. This is distinct from $\mathcal{L}_t(M)$ in that we do not quotient out by the skein relations of Figure \ref{fig_skeinrelations}. The definition of $\Omega$ extends to give rise to a linear endomorphism of $\widehat{\mathcal{L}}_t(M)$ (but not of $\mathcal{L}_t(M)$).  As before, the identification \eqref{eqn_surfaceglue} makes $\widehat{\mathcal{L}}_t(\RS\times [0,1])$ into an algebra. In this case, the endomorphism defined by $\Omega$ is multiplicative.
\end{remark}

The following theorem, which explains the relationship between the skein $\Omega$ and the Hermite-Jacobi action, is Theorem 5.3 of \cite{gelcauribe1}.

\begin{theorem} \label{thm_skeinfourier}
Let $h_L\in\mcg{\RS}$ be a diffeomorphism that is represented by surgery on a framed link $L$; then the skein associated to the discrete Fourier transform $\rho(h_{L})$ by \eqref{eqn_linkthetarep} is $\Omega(L)$. Consequently, if we consider $\rho(h_{L})$ as an endomorphism of $\widetilde{\mathcal{L}}_t(H_g)$ using Theorem \ref{thm_thetaskein}, then
\begin{equation} \label{eqn_skeinfourier}
\rho(h_{L})[\beta] = \Omega(L)\cdot\beta, \quad \beta\in\widetilde{\mathcal{L}}_t(H_g).
\end{equation}
\end{theorem}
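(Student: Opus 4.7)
The plan is to first deduce the displayed formula \eqref{eqn_skeinfourier} from the identification of the skein representing $\rho(h_L)$, then prove that main assertion by verifying the exact Egorov identity in the skein-theoretic framework. Once we know that $\Omega(L) \in \widetilde{\mathcal{L}}_t(\RS \times [0,1])$ corresponds to $\rho(h_L)$ under the isomorphism \eqref{eqn_linkthetarep}, the displayed formula follows immediately from the $\widetilde{\mathcal{L}}_t(\RS \times [0,1])$-module structure on $\widetilde{\mathcal{L}}_t(H_g)$ induced by the boundary identification \eqref{eqn_bdryglue}.

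To identify the skein associated to $\rho(h_L)$, I would invoke the characterization provided by Theorem \ref{thm_stone}: $\rho(h_L)$ is determined up to a scalar of unit modulus by the exact Egorov identity. Through the isomorphisms of Theorem \ref{thm_linkingheisenberg} and Proposition \ref{prop_thetaops}, that identity translates, for all $(p,q)$, into the skein-theoretic relation
\begin{equation*}
\Omega(L) \cdot S_{pq} \;=\; S_{(h_L)_*(p,q)} \cdot \Omega(L) \qquad \text{in } \widetilde{\mathcal{L}}_t(\RS \times [0,1])
\end{equation*}
up to a single $(p,q)$-independent unit-modulus scalar, where $S_{pq}$ denotes the skein $t^{-p^T q} a_1^{p_1}\cdots a_g^{p_g} b_1^{q_1}\cdots b_g^{q_g}$ corresponding to $O_{pq}$. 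It therefore suffices to verify this identity.

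The verification proceeds by a handle-slide argument. Any element of $\mcg{\RS}$ is a product of Dehn twists, and since $\Omega(L_1 \cup L_2) = \Omega(L_1)\cdot \Omega(L_2)$ whenever $L_1$ and $L_2$ occupy stacked collar neighborhoods in $\RS \times [0,1]$, it suffices to handle the case $L = T^\pm$ of a single framed curve implementing a Dehn twist along $T \subset \RS$. The crucial input is the \emph{handle-slide property of} $\Omega$: any strand of $S_{pq}$ that passes transversally through a tubular neighborhood of $T^\pm$ can be slid over $T^\pm$ without altering the skein $\Omega(T^\pm) \cdot (-)$, and each such slide transforms that strand exactly as $(h_L)_*$ transforms the corresponding homology class. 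Working in a solid torus neighborhood of $T^\pm$, this property reduces to a Gauss-sum calculation: the sum of parallel copies in $\Omega(T^\pm)$, combined with the powers of $t$ contributed by the crossings and by the $\pm 1$-twist, collapses via the identity $\sum_{i=0}^{N-1} t^{2ki} = N\delta_{k\equiv 0 \bmod N}$, while the second reduced-skein relation removes any $N$-fold cabling that survives, yielding the Dehn-twist-transformed skein on the right-hand side. The factor $N^{-\frac{1}{2}}$ in the definition of $\Omega$ is precisely what makes the residual scalar have unit modulus.

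The main obstacle lies in this handle-slide calculation itself: carefully tracking the blackboard framings, the powers of $t$ produced by each crossing and twist of $T^\pm$, and organizing the Gauss sums so that the resulting skein is exactly the one representing the Dehn-twist image of $S_{pq}$ up to a unit-modulus scalar. The general case then follows by decomposing $h_L$ into a product of Dehn twists and applying the single-curve case iteratively.
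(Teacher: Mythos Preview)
The paper does not give its own proof of this theorem: it is stated as Theorem~5.3 of \cite{gelcauribe1} and is followed only by the end-of-proof symbol. There is therefore no in-paper argument to compare your proposal against.

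That said, your strategy is the standard one and is essentially what \cite{gelcauribe1} does: characterize $\rho(h_L)$ up to a unit scalar via the exact Egorov identity (Stone--von~Neumann), transport that identity to $\widetilde{\mathcal{L}}_t(\RS\times[0,1])$ via Theorem~\ref{thm_linkingheisenberg}, and then verify that left multiplication by $\Omega(L)$ intertwines $S_{pq}$ with $S_{(h_L)_*(p,q)}$ by reducing to a single Dehn twist and invoking the handle-slide property of $\Omega$. Two small points to tighten. First, the Egorov identity itself must hold \emph{exactly}, not ``up to a $(p,q)$-independent scalar''; the scalar ambiguity enters only afterwards, when Schur's lemma identifies the intertwiner with $\rho(h_L)$. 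Second, to apply Schur you need $\Omega(L)$ to act invertibly, so you should either observe directly that each $\Omega(T^{\pm})$ is unitary (the Gauss sum $N^{-1/2}\sum_{j}t^{\pm j^2}$ has modulus one) or note that the intertwining relation already forces the image of $\Omega(L)$ to be a $\heis(\mathbb{Z}_N^g)$-subrepresentation, hence all of $\spacetheta(\RS)$ by irreducibility once it is nonzero.
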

\noproof

\begin{remark}
Of course, since $\rho(h_{L})$ is a projective unitary representation, what is meant by \eqref{eqn_skeinfourier} is that left multiplication by the skein $\Omega(L)$ is a unitary representative for the equivalence class represented by $\rho(h_{L})$.
\end{remark}

\begin{remark} \label{rem_dehnaction}
Since the mapping class group is generated by Dehn twists, Theorem \ref{thm_skeinfourier} is sufficient to describe the Hermite-Jacobi action. Consider the subalgebra $E$ of $\widehat{\mathcal{L}}_t(\RS\times [0,1])$ that is linearly generated by isotopy classes of links for which surgery along that link does not change the diffeomorphism type of $\RS\times [0,1]$. We may consider the diffeomorphism represented by surgery on a framed link as a multiplicative map
\begin{displaymath}
\begin{array}{ccc}
E & \to & \mathbb{C}[\mcg{\RS}], \\
L & \mapsto & h_L.
\end{array}
\end{displaymath}

If
\[ h=h_{T_1^{\pm}}\circ h_{T_2^{\pm}}\circ\cdots\circ h_{T_n^{\pm}} \]
is a product of Dehn twists, it follows that $h$ is represented by surgery on the framed link $T_1^{\pm}\cdots T_n^{\pm}$. Consequently,
\[ \rho(h) = \rho(h_{T_1^{\pm}\cdots T_n^{\pm}}) = \Omega(T_1^{\pm}\cdots T_n^{\pm}). \]
\end{remark}

\section{The quantum group of abelian Chern-Simons theory} \label{sec_quantumgroup}

Any isotopy of knots can be decomposed into a sequence of  Reidemeister moves. Of them, the third Reidemeister move, an instance of which is depicted in Figure \ref{fig_reidemeister3}, was interpreted by Drinfeld \cite{drinfeld} as a symmetry which leads to the existence of quantum groups. It follows that the linking number skein modules, and hence the theory of classical theta functions, should have an associated quantum group. This is the quantum group of abelian Chern-Simons theory. In what follows we will explain how this quantum group is constructed and how the theory of classical theta functions is modeled using it.

\begin{figure}[ht]
\centering
\scalebox{.50}{\includegraphics{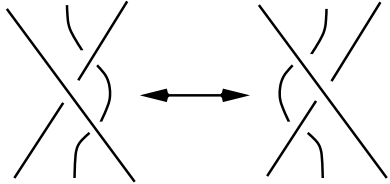}}
\caption{The third Reidemeister move leads to the study of quantum groups.}
\label{fig_reidemeister3}
\end{figure}

\subsection{The quantum group}

From now on we set $t=e^{\frac{i\pi}{N}}$, where $N$ is the even integer from Section \ref{sec_thetafun}. Note that $t$ is a primitive $2N$th root of unity. It is worth mentioning that, for the purposes of Sections \ref{sec_rmatrix} and \ref{sec_twist} alone, everything still holds true if $N$ is an odd integer. The quantum group associated to $U(1)$ is very simple; it is nothing more than the group algebra of the cyclic group $\mathbb{Z}_{2N}$ of order $2N$. However, in order to explain how one arrives at this as the correct definition of the quantum group for $U(1)$, we need to revisit the construction of the quantum group for $SL_2(\mathbb{C})$, from which it is deduced. The reader unconcerned with the derivation of this quantum group may skip ahead to Definition \ref{def_quantumgroup}.

Recall the Drinfeld-Jimbo construction (see e.g. \cite[\S XVII.2]{kassel})  of $\mathcal{U}_h(\mathfrak{sl}_2(\mathbb{C}))$. This is the quotient of the algebra $\mathbb{C}\langle X,Y,H \rangle[[h]]$ by the closure of the ideal generated by the relations
\[ [H,X]=2X, \quad [H,Y]=-2Y, \quad [X,Y]=\frac{e^{\frac{hH}{2}}-e^{-\frac{hH}{2}}}{e^{\frac{h}{2}}-e^{-\frac{h}{2}}} \]
in the $h$-adic topology. It carries the structure of a Hopf algebra;
\begin{displaymath}
\begin{array}{ll}
\Delta(H):=H\otimes 1 + 1\otimes H, & \Delta(X):=X\otimes e^{\frac{hH}{4}} + e^{-\frac{hH}{4}}\otimes X,\\
  \Delta(Y):=Y\otimes e^{\frac{hH}{4}} + e^{-\frac{hH}{4}}\otimes Y, &
S(H):=-H, \\ S(X):=-X, & S(Y):=-Y.
\end{array}
\end{displaymath}

There is a surjective map of Hopf algebras $\mathcal{U}_h(\mathfrak{sl}_2(\mathbb{C})) \to \mathcal{U}(\mathfrak{sl}_2(\mathbb{C}))$ defined by its action on the generators as follows;
\[ h\mapsto 0, \quad X\mapsto \left[\begin{array}{rr} 0 & 1 \\ 0 & 0\end{array}\right], \quad Y\mapsto \left[\begin{array}{rr} 0 & 0 \\ 1 & 0\end{array}\right], \quad H\mapsto \left[\begin{array}{rr} 1 & 0 \\ 0 & -1\end{array}\right]. \]

Now recall the definition, according to \cite[\S 8]{reshetikhinturaev}, of $\mathcal{U}_q(\mathfrak{sl}_2(\mathbb{C}))$. This is defined as the quotient of $\mathbb{C}\langle K,K^{-1},X,Y \rangle[q,q^{-1}]$ by the relations
\begin{displaymath}
\begin{array}{ll}
KK^{-1} = 1 = K^{-1}K & (q-q^{-1})[X,Y] = K^2-K^{-2} \\
KXK^{-1}=qX & KYK^{-1}=q^{-1}Y
\end{array}
\end{displaymath}
It carries the Hopf algebra structure
\begin{displaymath}
\begin{array}{lll}
\Delta(K)=K\otimes K, & \Delta(K^{-1})=K^{-1}\otimes K^{-1}, & S(K)=K^{-1}\\
 S(K^{-1}) = K, &
\Delta(X)=X\otimes K + K^{-1}\otimes X, & \Delta(Y)=Y\otimes K + K^{-1}\otimes Y,\\
 S(X)=-X, & S(Y)=-Y. &
\end{array}
\end{displaymath}

There is a map of Hopf algebras $\mathcal{U}_q(\mathfrak{sl}_2(\mathbb{C})) \to \mathcal{U}_h(\mathfrak{sl}_2(\mathbb{C}))$ defined by its action on the generators as follows;
\[ K \mapsto e^{\frac{hH}{4}}, \quad K^{-1} \mapsto e^{-\frac{hH}{4}}, \quad X \mapsto X, \quad Y \mapsto Y, \quad q \mapsto e^{\frac{h}{2}}. \]

Finally, Reshetikhin and Turaev \cite{reshetikhinturaev} define their quantum group $\mathcal{U}_t(\mathfrak{sl}_2(\mathbb{C}))$ as the quotient of $\mathcal{U}_q(\mathfrak{sl}_2(\mathbb{C}))$ by the relations
\[ K^{2N}=1, \quad X^{\frac{N}{2}}=Y^{\frac{N}{2}}=0, \quad q=t^2. \]

Having recalled the construction of the quantum group for $SL_2(\mathbb{C})$, we may now deduce from it the construction of the quantum group for $U(1)$. There is an inclusion of groups
\begin{displaymath}
\begin{array}{ccc}
U(1) & \to & SL_2(\mathbb{C}) \\
z & \mapsto & \left[\begin{array}{rr} z & 0 \\ 0 & \bar{z} \end{array}\right]
\end{array}
\end{displaymath}
giving rise to an inclusion of (real) Lie algebras
\begin{equation} \label{eqn_lieinc}
\begin{array}{rcl}
\mathfrak{u}(1)=\mathbb{R} & \to & \mathfrak{sl}_2(\mathbb{C}) \\
x & \mapsto & \left[\begin{array}{rr} ix & 0 \\ 0 & -ix \end{array}\right]
\end{array}
\end{equation}
If we denote by
\[ \mathfrak{u}_{\mathbb{C}}(1):=\mathbb{C}\otimes_{\mathbb{R}}\mathfrak{u}(1)=\mathbb{C} \]
the complexification of $\mathfrak{u}(1)$, then \eqref{eqn_lieinc} extends to a $\mathbb{C}$-linear map
\[ \mathfrak{u}_{\mathbb{C}}(1)\to\mathfrak{sl}_2(\mathbb{C}) \]
of complex Lie algebras.

We wish to define analogues $\mathcal{U}_h(\mathfrak{u}_{\mathbb{C}}(1))$, $\mathcal{U}_q(\mathfrak{u}_{\mathbb{C}}(1))$ and $\mathcal{U}_t(\mathfrak{u}_{\mathbb{C}}(1))$ of the above quantum groups for $SL_2(\mathbb{C})$ in such a way that they fit naturally into a commutative diagram
\begin{equation} \label{eqn_qgdiagram}
\xymatrix{ \mathcal{U}(\mathfrak{u}_{\mathbb{C}}(1)) \ar[r] & \mathcal{U}(\mathfrak{sl}_2(\mathbb{C})) \\ \mathcal{U}_h(\mathfrak{u}_{\mathbb{C}}(1)) \ar@{-->}[r] \ar@{-->}[u] & \mathcal{U}_h(\mathfrak{sl}_2(\mathbb{C})) \ar[u] \\ \mathcal{U}_q(\mathfrak{u}_{\mathbb{C}}(1)) \ar@{-->}[r] \ar@{-->}[u] \ar@{-->}[d] & \mathcal{U}_q(\mathfrak{sl}_2(\mathbb{C})) \ar[u] \ar[d] \\ \mathcal{U}_t(\mathfrak{u}_{\mathbb{C}}(1)) \ar@{-->}[r] & \mathcal{U}_t(\mathfrak{sl}_2(\mathbb{C})) }
\end{equation}

Since the Lie algebra $\mathfrak{u}_{\mathbb{C}}(1)$ is not semi-simple, we cannot use the Drinfeld-Jimbo construction to define its quantized enveloping algebra $\mathcal{U}_h(\mathfrak{u}_{\mathbb{C}}(1))$. However, by the definition of quantum enveloping algebra, we must have
\[ \mathcal{U}_h(\mathfrak{u}_{\mathbb{C}}(1)) = \mathcal{U}(\mathfrak{u}_{\mathbb{C}}(1))[[h]] \]
as a $\mathbb{C}[[h]]$-module. The only reasonable way to place a Hopf algebra structure on $\mathcal{U}_h(\mathfrak{u}_{\mathbb{C}}(1))$ so that it fits into the commutative diagram \eqref{eqn_qgdiagram} appears to be to choose the trivial deformation of $\mathcal{U}(\mathfrak{u}_{\mathbb{C}}(1))$. Consequently, since $\mathcal{U}(\mathfrak{u}_{\mathbb{C}}(1))$ is a polynomial algebra in a single variable, we define
\[ \mathcal{U}_h(\mathfrak{u}_{\mathbb{C}}(1)):=\mathbb{C}\langle H \rangle[[h]] \]
and define
\begin{displaymath}
\begin{array}{ll}
\mathcal{U}_h(\mathfrak{u}_{\mathbb{C}}(1)) \to \mathcal{U}_h(\mathfrak{sl}_2(\mathbb{C})); & h\mapsto h, \quad H\mapsto H \\
\mathcal{U}_h(\mathfrak{u}_{\mathbb{C}}(1)) \to \mathcal{U}(\mathfrak{u}_{\mathbb{C}}(1)); & h\mapsto 0, \quad H\mapsto [-i]\in\mathfrak{u}_{\mathbb{C}}(1)
\end{array}
\end{displaymath}

Similarly, one now realizes that the only reasonable way to define $\mathcal{U}_q(\mathfrak{u}_{\mathbb{C}}(1))$ is to define it as the quotient
\[ \mathcal{U}_q(\mathfrak{u}_{\mathbb{C}}(1)):=\mathbb{C}\langle K,K^{-1} \rangle[q,q^{-1}]/(KK^{-1}=1=K^{-1}K) \]
and choose the maps to be
\begin{displaymath}
\begin{array}{ll}
\mathcal{U}_q(\mathfrak{u}_{\mathbb{C}}(1)) \to \mathcal{U}_q(\mathfrak{sl}_2(\mathbb{C})); & K\mapsto K, \quad q\mapsto q \\
\mathcal{U}_q(\mathfrak{u}_{\mathbb{C}}(1)) \to \mathcal{U}_h(\mathfrak{u}_{\mathbb{C}}(1)); & K\mapsto e^{\frac{hH}{4}}, \quad q\mapsto e^{\frac{h}{2}}
\end{array}
\end{displaymath}

Finally, one arrives at the only sensible choice for $\mathcal{U}_t(\mathfrak{u}_{\mathbb{C}}(1))$.
\begin{definition} \label{def_quantumgroup}
The quantum group $\mathcal{U}_t(\mathfrak{u}_{\mathbb{C}}(1))$ is defined to be the quotient of the quantum group $\mathcal{U}_q(\mathfrak{u}_{\mathbb{C}}(1))$ by the ideal generated by the relations
\[ K^{2N}=1,\quad q=t^2. \]
\end{definition}
The quantum group $\mathcal{U}_t(\mathfrak{u}_{\mathbb{C}}(1))$ may be identified with the group algebra of $\mathbb{Z}_{2N}$ by identifying $K$ with the generator of $\mathbb{Z}_{2N}$. The remaining two maps at the bottom of \eqref{eqn_qgdiagram} are defined in the obvious manner.

The irreducible representations of $\mathcal{U}_t(\mathfrak{u}_{\mathbb{C}}(1))=\mathbb{C}[\mathbb{Z}_{2N}]$ are
\[ V^k; \quad k=0,1,\ldots, 2N-1 \]
where $V^k\cong \mathbb{C}$ and $K$ acts by $K\cdot v=t^{k}v$. We denote by $e_k$ the canonical basis element of $V^k$.

Because of the Hopf algebra structure, finite-dimensional representations form a ring (with the underlying abelian group defined \'a la Grothendieck from the monoid whose addition is the direct sum of representations) in which the product is provided by the tensor product of representations. Since $(V^1)^{\otimes k} \cong V^k$, the representation ring is
\begin{eqnarray*}
\mathbb{C}[V^1]/\left((V^1)^{2N}-1\right) = \mathbb{C}[\mathbb{Z}_{2N}].
\end{eqnarray*}
The fact that, in this case, the representation ring coincides with the quantum group, is purely coincidental. Since $\mathcal{U}_t(\mathfrak{u}_{\mathbb{C}}(1))$ is a Hopf algebra, this implies that the dual space of each representation is itself a representation. It is easy to see that there are  natural isomorphisms
\begin{eqnarray*}
D:(V^k)^*\rightarrow V^{2N-k},\quad De^k=e_k, \quad k=1,\ldots, 2N-1,
\end{eqnarray*}
where the functional $e^k$ is defined by $e^k(e_k)=1$.

In what follows, we will explain how $\mathcal{U}_t(\mathfrak{u}_{\mathbb{C}}(1))$ may be given the structure of a ribbon Hopf algebra. Everything will be phrased using the terminology from \cite{turaev}. Most of our computations are based on the simple fact that if $z\neq 1$ is a $k$th root of unity, then
\begin{equation} \label{eqn_rootsum}
1+z+z^2+\cdots+z^{k-1}=0.
\end{equation}

\subsection{The universal $R$-matrix} \label{sec_rmatrix}

Just as in the Reshetikhin-Turaev theory, we want to model the braiding of strands in Section \ref{sec_thetafun} by $R$-matrices. The $R$-matrix\footnote{This is actually $P\circ R$, where $P:V^m\otimes V^n\to V^n\otimes V^m$ transposes the factors.}  $V^m\otimes V^n\rightarrow V^n\otimes V^m$ should then come from the crossing of $m$ strands by $n$ strands, as in Figure \ref{fig_cross}, and hence should equal multiplication by $t^{mn}$ for all $m,n \in \{0,1,\ldots, 2N-1\}$.
\begin{figure}[ht]
\centering
\includegraphics{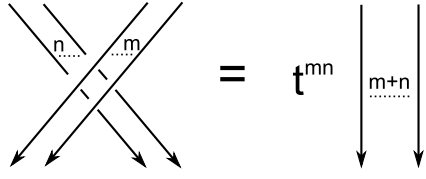}
\caption{The skein relation for the crossing of $n$ strands by $m$ strands.}
\label{fig_cross}
\end{figure}

We claim that, as for the quantum group of $SU(2)$ \cite{reshetikhinturaev}, these $R$-matrices are induced by a universal $R$-matrix in $\mathcal{U}_t(\mathfrak{u}_{\mathbb{C}}(1))$. The universal $R$-matrix should be of the form
\[ R:=\sum_{j,k=0}^{2N-1}c_{jk}K^j\otimes K^k. \]
Let us compute the coefficients $c_{jk}$.

Because $K^j=t^{jn}\id$ on $V^n$ for all $j$ and $n$, we obtain the system of equations
\begin{eqnarray} \label{eqn_rmatrixidentity}
\sum_{j,k=0}^{2N-1}c_{jk}t^{mj}t^{nk}=t^{mn}; \quad m,n\in \{0,1,\ldots, 2N-1\}.
\end{eqnarray}
If $T$ is the matrix whose $mn$th entry is $t^{mn}$ and $C=(c_{jk})$, then this equation becomes
\[ TCT=T \]
and hence we find that $C=T^{-1}$. Since $t$ is a primitive $2N$th root of unity, it follows from equation \eqref{eqn_rootsum} that
\begin{equation} \label{eqn_matinv}
c_{jk}=\frac{1}{2N}t^{-jk}.
\end{equation}
Hence, we arrive at the following formula for the $R$-matrix,
\begin{equation} \label{eqn_rmatrix}
R=\frac{1}{2N}\sum_{j,k\in\mathbb{Z}_{2N}} t^{-jk}K^j\otimes K^k.
\end{equation}
Note that this formula for $R$ implies that the $R$-matrix is symmetric in the sense that $P(R)=R$.

\begin{theorem} \label{thm_quasitriangular}
$(\mathcal{U}_t(\mathfrak{u}_{\mathbb{C}}(1)),R)$ is a quasi-triangular Hopf algebra.
\end{theorem}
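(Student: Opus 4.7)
The plan is to verify the three defining conditions of a quasi-triangular Hopf algebra: $R$ is invertible; the braiding identity $\Delta^{\mathrm{op}}(x) = R\Delta(x)R^{-1}$ holds for every $x \in \mathcal{U}_t(\mathfrak{u}_{\mathbb{C}}(1))$; and the two hexagon identities $(\Delta \otimes \id)(R) = R_{13}R_{23}$ and $(\id \otimes \Delta)(R) = R_{13}R_{12}$ hold in the appropriate triple tensor product.

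The braiding identity is essentially automatic. Since $\mathcal{U}_t(\mathfrak{u}_{\mathbb{C}}(1)) \cong \mathbb{C}[\mathbb{Z}_{2N}]$ is commutative, $R\Delta(x) = \Delta(x)R$, so the identity reduces to $\Delta^{\mathrm{op}} = \Delta$; but $\Delta(K^j) = K^j \otimes K^j$ is manifestly symmetric on generators, so this holds everywhere. This step uses no properties of $R$ beyond the invertibility that I still need to establish.

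For invertibility and the hexagon identities I would exploit the fact that $\mathbb{C}[\mathbb{Z}_{2N}]$ is semisimple with irreducible decomposition $\bigoplus_{k=0}^{2N-1} V^k$, so every tensor power decomposes as a direct sum of the one-dimensional modules $V^{m_1} \otimes \cdots \otimes V^{m_n}$ on which the algebra acts by characters; in particular, any element of such a tensor power is determined by its scalar of action on each $V^{m_1} \otimes \cdots \otimes V^{m_n}$. By construction (the system \eqref{eqn_rmatrixidentity} solved by \eqref{eqn_matinv}), $R$ acts on $V^m \otimes V^n$ as multiplication by $t^{mn}$, which is a nonzero scalar, yielding invertibility. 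Because $\Delta(K^j) = K^j \otimes K^j$ is diagonal, $V^\ell \otimes V^m$ becomes the $\mathcal{U}_t(\mathfrak{u}_{\mathbb{C}}(1))$-module $V^{\ell+m}$, so $(\Delta \otimes \id)(R)$ acts on $V^\ell \otimes V^m \otimes V^n$ by $t^{(\ell+m)n}$, and $R_{13}R_{23}$ acts by $t^{\ell n}\cdot t^{mn}$; these agree, and the second hexagon is entirely parallel.

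The only real obstacle is making the separation-of-points reasoning airtight in the tensor product. As a backup I would verify the first hexagon by a direct calculation: expanding $(\Delta \otimes \id)(R) = \frac{1}{2N}\sum_{j,k}t^{-jk}K^j \otimes K^j \otimes K^k$ and $R_{13}R_{23} = \frac{1}{(2N)^2}\sum t^{-j_1 k_1 - j_2 k_2} K^{j_1} \otimes K^{j_2} \otimes K^{k_1 + k_2}$, then collapsing the sum over $k_1$ (say) using the orthogonality relation $\sum_{k=0}^{2N-1} t^{k(j-j')} = 2N\delta_{j,j'}$, which is just the repeated application of \eqref{eqn_rootsum} already highlighted in the text. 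This same orthogonality gives $R^{-1} = \frac{1}{2N}\sum_{j,k}t^{jk}K^j \otimes K^k$ explicitly, confirming invertibility algebraically.
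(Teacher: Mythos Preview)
Your proof is correct. The paper's proof takes the same overall structure---verify invertibility, the braiding identity, and the two hexagon identities---but its primary method is the direct algebraic computation you describe as your backup: it writes down $R^{-1}=(S\otimes\id)(R)$ explicitly, multiplies out $RR^{-1}$ and $R_{13}R_{12}$ in terms of the basis $K^j\otimes K^k(\otimes K^l)$, and collapses the resulting double or triple sums using the orthogonality relation from \eqref{eqn_rootsum}.

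Your primary route is genuinely different and more conceptual: since $\mathbb{C}[\mathbb{Z}_{2N}]$ is commutative semisimple, so is each tensor power, and an element of $A^{\otimes n}$ is determined by its scalar of action on the full set of one-dimensional modules $V^{m_1}\otimes\cdots\otimes V^{m_n}$. This reduces every identity to a one-line check with powers of $t$, reusing exactly the defining property \eqref{eqn_rmatrixidentity} of $R$ rather than its explicit formula. The advantage is that the computations become trivial and the argument would transfer unchanged to any group-like $R$-matrix on a finite abelian group algebra; the paper's direct calculation, by contrast, is self-contained and yields the explicit formulas for $R^{-1}$ and $R^2$ that are reused later in the proof of the ribbon structure (Theorem \ref{thm_twist}).
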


\begin{proof}
We must show the following:
\begin{enumerate}
\item \label{item_quasitriangular1}
$R$ is invertible.
\item \label{item_quasitriangular2}
For all $a\in A$, $\Delta_{\mathrm{op}}(a)=R\Delta(a)R^{-1}$; where $\Delta_{\mathrm{op}}:=P\circ\Delta$.
\item \label{item_quasitriangular3}
The identities,
\begin{enumerate}
\item \label{item_quasitriangular3a}
$ R_{13}R_{12} = (\id\otimes\Delta)[R],$
\item \label{item_quasitriangular3b}
$R_{13}R_{23} = (\Delta\otimes\id)[R]. $
\end{enumerate}
\end{enumerate}

If $\mathcal{U}_t(\mathfrak{u}_{\mathbb{C}}(1))$ is to be a quasi-triangular Hopf algebra, then a formula for $R^{-1}$ should be given by
\[ R^{-1} = (S\otimes\id)[R] = \frac{1}{2N}\sum_{j,k\in\mathbb{Z}_{2N}} t^{-jk}K^{-j}\otimes K^k = \frac{1}{2N}\sum_{i,j\in\mathbb{Z}_{2N}} t^{jk}K^{-j}\otimes K^{-k}. \]
We may check that this element is inverse to $R$ as follows;
\begin{alignat*}{1}
RR^{-1} &= \frac{1}{4N^2}\sum_{j,j',k,k'\in\mathbb{Z}_{2N}}t^{j'k'-jk}K^{j-j'}
\otimes K^{k-k'} \\
&= \frac{1}{4N^2}\sum_{m,n\in\mathbb{Z}_{2N}}\left(\sum_{j,k\in\mathbb{Z}_{2N}}t^{(j-m)(k-n)-jk}\right)K^m\otimes K^n.
\end{alignat*}
But
\[ \sum_{j,k\in\mathbb{Z}_{2N}}t^{(j-m)(k-n)-jk} = t^{mn}\left(\sum_{j\in\mathbb{Z}_{2N}}t^{-jn}\right)\left(\sum_{k\in\mathbb{Z}_{2N}}t^{-km}\right). \]
By \eqref{eqn_rootsum}, this  is zero unless $m=n=0$, in which case it is equal to $4N^2$. Hence $RR^{-1}=1\otimes 1$.

Having proven \eqref{item_quasitriangular1}, item \eqref{item_quasitriangular2} follows trivially from the fact that $\mathcal{U}_t(\mathfrak{u}_{\mathbb{C}}(1))=\mathbb{C}[\mathbb{Z}_{2N}]$ is both commutative and co-commutative.

Finally, to establish \eqref{item_quasitriangular3}, one may compute directly from equation \eqref{eqn_rmatrix} that
\begin{alignat*}{1}
R_{13}R_{12} &= \frac{1}{4N^2}\sum_{j,j',k,k'\in\mathbb{Z}_{2N}}t^{-jk-j'k'} K^{j+j'}\otimes K^{k'}\otimes K^k \\
&= \frac{1}{4N^2}\sum_{m\in\mathbb{Z}_{2N}}\sum_{j,k,k'\in\mathbb{Z}_{2N}}t^{-jk-(m-j)k'} K^m\otimes K^{k'}\otimes K^k \\
&= \frac{1}{4N^2}\sum_{m,k,k'\in\mathbb{Z}_{2N}}t^{-mk'}\left(\sum_{j\in\mathbb{Z}_{2N}}t^{j(k'-k)}\right) K^m\otimes K^{k'}\otimes K^k
\end{alignat*}
Now by \eqref{eqn_rootsum}, $\sum_{j\in\mathbb{Z}_{2N}}t^{j(k'-k)}$ is equal to zero, unless $k'=k$, in which case it is equal to $2N$. Hence,
\begin{alignat*}{1}
R_{13}R_{12} &= \frac{1}{2N}\sum_{m,k\in\mathbb{Z}_{2N}}t^{-mk} K^m\otimes K^k\otimes K^k =(\id\otimes\Delta)[R]
\end{alignat*}
which establishes \eqref{item_quasitriangular3a}. Item \eqref{item_quasitriangular3b} follows from a similar argument.
\end{proof}

\subsection{The universal twist} \label{sec_twist}

We wish to show that the quasi-triangular Hopf algebra $(\mathcal{U}_t(\mathfrak{u}_{\mathbb{C}}(1)),R)$ of the previous section may be extended to a ribbon Hopf algebra by constructing a universal twist $v\in Z(\mathcal{U}_t(\mathfrak{u}_{\mathbb{C}}(1)))=\mathcal{U}_t(\mathfrak{u}_{\mathbb{C}}(1))$. Multiplication on the left of $V^k$ by this element is intended to model a positive right-hand twist, as in Figure \ref{fig_twist}.
\begin{figure}[ht]
\centering
\includegraphics{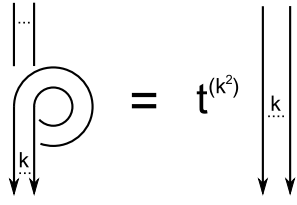}
\caption{The skein relation for a positive twist of $k$ strands.}
\label{fig_twist}
\end{figure}

If we write $v$ as
\[v=\sum_{j\in\mathbb{Z}_{2N}}\mu_j K^j\]
then this observation leads us to the equation
\begin{equation} \label{eqn_twistidentity}
\sum_{j\in\mathbb{Z}_{2N}}\mu_j t^{jk}=t^{k^2}
\end{equation}
for the coefficients $\mu_j$. From the formula \eqref{eqn_matinv} for $T^{-1}$ derived in Section \ref{sec_rmatrix}, we find
\[ \mu_j = \frac{1}{2N}\sum_{l\in\mathbb{Z}_{2N}}t^{-jl}t^{l^2}, \]
which yields
\begin{eqnarray*}
v = \frac{1}{2N}\sum_{j,k\in\mathbb{Z}_{2N}}t^{k(k-j)}K^j
= \frac{1}{2N}\sum_{j,k\in\mathbb{Z}_{2N}}t^{(k+j)k}K^j.
\end{eqnarray*}
Before proceeding further, we determine a simpler expression for  $v$. Note that
\begin{alignat*}{1}
\sum_{k\in\mathbb{Z}_{2N}}t^{(k+j)k} &= \sum_{k=0}^{N-1}\left(t^{(k+j)k}+t^{(k+N+j)(k+N)}\right) \\&= \sum_{k=0}^{N-1}t^{(k+j)k}(1+t^{2kN+jN+N^2})
= (1+(-1)^{j+N})\sum_{k=0}^{N-1}t^{(k+j)k}.
\end{alignat*}
Hence
\begin{alignat*}{1}
v &= \frac{1}{2N}\sum_{j\in\mathbb{Z}_{2N}}\left[(1+(-1)^{j+N})\sum_{k=0}^{N-1}t^{(k+j)k}K^j\right] \\
&= \frac{1}{N}\sum_{j=0}^{N-1}\sum_{k=0}^{N-1}t^{(k+2j+N)k}K^{2j+N}
= \frac{1}{N}\sum_{j,k\in\mathbb{Z}_{N}}(-1)^k t^{(k+2j)k}K^{2j+N} \\
&= \frac{1}{N}\sum_{j,k\in\mathbb{Z}_{N}}(-1)^{k-j} t^{(k+j)(k-j)}K^{2j+N} \\
\end{alignat*}
yielding the formula
\begin{equation} \label{eqn_twist}
v = \frac{1}{N}\left(\sum_{k\in\mathbb{Z}_{N}}(-1)^k t^{k^2}\right)\left(\sum_{j\in\mathbb{Z}_{N}}(-1)^{j} t^{-j^2}K^{2j+N}\right).
\end{equation}

\begin{theorem} \label{thm_twist}
$(\mathcal{U}_t(\mathfrak{u}_{\mathbb{C}}(1)),R,v)$ is a ribbon Hopf algebra.
\end{theorem}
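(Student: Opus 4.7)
The plan is to verify the ribbon Hopf algebra axioms in the sense of \cite{turaev}: given the quasi-triangular structure already established in Theorem \ref{thm_quasitriangular}, it remains to show that $v$ is invertible and central, that $\epsilon(v)=1$, that $S(v)=v$, that $\Delta(v)$ satisfies the required compatibility with $R_{21}R$, and that $v^2 = uS(u)$ for the associated Drinfeld element $u$.

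The key observation that controls the entire proof is that $\mathcal{U}_t(\mathfrak{u}_{\mathbb{C}}(1)) \cong \mathbb{C}[\mathbb{Z}_{2N}]$ is commutative and semisimple, with one-dimensional irreducibles $V^0,\ldots,V^{2N-1}$. Centrality of $v$ is therefore automatic, and any algebra identity reduces to a comparison of scalars via the action on each $V^k$ (respectively $V^m\otimes V^n$). Since \eqref{eqn_twistidentity} shows $v$ acts on $V^k$ as the nonzero scalar $t^{k^2}$, invertibility follows. Setting $k=0$ in \eqref{eqn_twistidentity} immediately yields $\epsilon(v) = \sum_j \mu_j = 1$. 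The change of variable $j\mapsto -j$ in \eqref{eqn_twistidentity}, together with $S(K^j) = K^{-j}$ and $t^{k^2} = t^{(-k)^2}$, shows $S(v)$ acts on $V^k$ by the same scalar $t^{k^2}$ as $v$, so $S(v)=v$.

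The coproduct axiom and the Drinfeld-squared identity are the main computational steps. For the coproduct, $\Delta(K^j) = K^j \otimes K^j$ combined with \eqref{eqn_twistidentity} shows $\Delta(v)$ acts on $V^m \otimes V^n$ as $t^{(m+n)^2}$, while \eqref{eqn_rmatrix} together with the symmetry $P(R)=R$ shows $R_{21}R$ acts as $t^{2mn}$ and $v\otimes v$ as $t^{m^2+n^2}$; the axiom then reduces to the schoolroom identity $(m+n)^2 = m^2 + 2mn + n^2$. For the Drinfeld-squared identity, I would compute the Drinfeld element $u$ by applying $S$ and the multiplication map to \eqref{eqn_rmatrix}; the elementary identity \eqref{eqn_rootsum} collapses the resulting double sum to a single term, from which one reads off that $u$ (and hence $S(u)$) acts on $V^m$ as $t^{\pm m^2}$, so $uS(u)$ acts as $t^{\pm 2m^2}$ in agreement with $v^{\pm 2}$.

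The principal obstacle will be not the arithmetic but the bookkeeping of conventions: matching the signs and the choice of $R$ versus $R^{-1}$ in the definitions of \cite{turaev} against our explicit formulas \eqref{eqn_rmatrix} and \eqref{eqn_twist}, so that each axiom is verified in its Turaev-canonical form. Once the conventions are aligned, every identity becomes a direct application of \eqref{eqn_rootsum} and elementary exponent arithmetic for powers of the primitive $2N$-th root of unity $t$.
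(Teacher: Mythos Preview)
Your approach is correct and takes a genuinely different route from the paper. The paper proves the theorem by direct manipulation of the explicit formulas \eqref{eqn_rmatrix} and \eqref{eqn_twist}: it writes down a candidate for $v^{-1}$ and checks $vv^{-1}=1$ via a Gauss-sum computation, verifies $S(v)=v$ by substituting $j\mapsto -j$ in the closed form \eqref{eqn_twist}, and establishes $\Delta(v)=P(R)R(v\otimes v)$ by first computing $R^2$ explicitly (obtaining \eqref{eqn_rmatrixsquared}) and then multiplying out $R^2(v\otimes v)$ term by term. You instead exploit the fact that $\mathbb{C}[\mathbb{Z}_{2N}]$ is commutative and semisimple, so that every identity in $\qgr$ or $\qgr\otimes\qgr$ can be checked by evaluating scalars on the irreducibles $V^k$ or $V^m\otimes V^n$; the axioms then reduce to \eqref{eqn_twistidentity}, \eqref{eqn_rmatrixidentity}, and the identity $(m+n)^2=m^2+2mn+n^2$.

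Your method is shorter, more transparent, and makes it obvious \emph{why} the ribbon axioms hold. The paper's method, while heavier, is self-contained at the level of the algebra and produces along the way the explicit formula \eqref{eqn_rmatrixsquared} for $R^2$, which has some independent value. One bookkeeping point: the paper verifies only three conditions---invertibility of $v$, $S(v)=v$, and $\Delta(v)=P(R)R(v\otimes v)$---so your additional checks of $\epsilon(v)=1$ and the Drinfeld-element relation are not required for the definition in use here. In particular, with the paper's convention (positive twist, $\Delta(v)=R_{21}R(v\otimes v)$ rather than its inverse) one finds that $u$ and $S(u)$ both act on $V^m$ as $t^{-m^2}$, so $uS(u)=v^{-2}$; your $\pm$ hedge is thus resolved, and the discrepancy is purely conventional, not a defect in your argument.
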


\begin{proof}
We must show the following:
\begin{enumerate}
\item \label{item_twist1}
$v$ is invertible,
\item \label{item_twist2}
$ S(v)=v,$
\item \label{item_twist3}
$ \Delta(v)=P(R)R(v\otimes v). $
\end{enumerate}

To prove \eqref{item_twist1}, consider that the inverse of $v$ should model a left-hand twist. This suggests looking for a formula for $v^{-1}$ by replacing $t$ with $t^{-1}$ in \eqref{eqn_twist}. This yields the putative formula
\[ v^{-1} = \frac{1}{N}\left(\sum_{k\in\mathbb{Z}_{N}}(-1)^k t^{-k^2}\right)\left(\sum_{j\in\mathbb{Z}_{N}}(-1)^{j} t^{j^2}K^{2j+N}\right). \]

One checks that this element is inverse to $v$ as follows. Firstly
\begin{eqnarray}\label{eqn_squareprod}
&&\left(\sum_{k\in\mathbb{Z}_N}(-1)^k t^{k^2} \right) \left(\sum_{k'\in\mathbb{Z}_N}(-1)^{k'} t^{-k'^2} \right) = \sum_{k,k'\in\mathbb{Z}_N}(-1)^{k+k'}t^{(k-k')(k+k')}
\notag\\
&&\quad =\sum_{k,k'\in\mathbb{Z}_N}(-1)^k t^{k(k+2k')}
= \sum_{k\in\mathbb{Z}_N}\left[(-1)^k t^{k^2}\sum_{k'\in\mathbb{Z}_N}t^{2kk'}\right]
= N
\end{eqnarray}
where the last equality follows from \eqref{eqn_rootsum}. Hence
\begin{alignat*}{1}
vv^{-1} &= \frac{1}{N}\left(\sum_{j\in\mathbb{Z}_{N}}(-1)^{j} t^{-j^2}K^{2j+N}\right)\left(\sum_{j'\in\mathbb{Z}_{N}}(-1)^{j'} t^{j'^2}K^{2j'+N}\right) \\
&= \frac{1}{N}\sum_{j,j'\in\mathbb{Z}_{N}}(-1)^{j+j'} t^{j'^2-j^2}K^{2j+2j'}
= \frac{1}{N}\sum_{m\in\mathbb{Z}_N}\sum_{j\in\mathbb{Z}_{N}}(-1)^{m} t^{(m-j)^2-j^2}K^{2m} \\
&= \frac{1}{N}\sum_{m\in\mathbb{Z}_N}(-1)^{m}t^{m^2}\left(\sum_{j\in\mathbb{Z}_{N}} t^{-2mj}\right)K^{2m} = 1,
\end{alignat*}
where the last line follows from \eqref{eqn_rootsum}, proving that $v$ is invertible as claimed.

To show \eqref{item_twist2}, note that $S(K^{2j+N})=K^{-2j-N}=K^{-2j+N}$, hence
\begin{alignat*}{1}
S(v) &= \frac{1}{N}\left(\sum_{k\in\mathbb{Z}_{N}}(-1)^k t^{k^2}\right)\left(\sum_{j\in\mathbb{Z}_{N}}(-1)^{j} t^{-j^2}K^{-2j+N}\right) \\
&= \frac{1}{N}\left(\sum_{k\in\mathbb{Z}_{N}}(-1)^k t^{k^2}\right)\left(\sum_{j\in\mathbb{Z}_{N}}(-1)^{-j} t^{-(-j)^2}K^{2j+N}\right) = v.
\end{alignat*}

Finally, since $P(R)=R$, to show \eqref{item_twist3} we need to compute $R^2$. From \eqref{eqn_rmatrix} we calculate
\begin{alignat*}{1}
R^2 &= \frac{1}{4N^2}\sum_{j,j',k,k'\in\mathbb{Z}_{2N}}t^{-jk-j'k'}K^{j+j'}\otimes K^{k+k'} \\
&= \frac{1}{4N^2} \sum_{m,n\in\mathbb{Z}_{2N}}\sum_{j,k\in\mathbb{Z}_{2N}} t^{-jk-(m-j)(n-k)}K^m\otimes K^n \\
&= \frac{1}{4N^2} \sum_{m,n,k\in\mathbb{Z}_{2N}}t^{m(k-n)}\left(\sum_{j\in\mathbb{Z}_{2N}} t^{(n-2k)j}\right)K^m\otimes K^n \\
&= \frac{1}{2N} \sum_{m,k\in\mathbb{Z}_{2N}}t^{-mk}K^m\otimes K^{2k} \\
&= \frac{1}{2N} \sum_{m\in\mathbb{Z}_{2N}}\sum_{k=0}^{N-1}(t^{-mk}+t^{-m(k+N)})K^m\otimes K^{2k} \\
&= \frac{1}{2N} \sum_{m\in\mathbb{Z}_{2N}}\left[(1+(-1)^m)\sum_{k=0}^{N-1}t^{-mk}K^m\otimes K^{2k}\right] \\
&= \frac{1}{N} \sum_{m=0}^{N-1}\sum_{k=0}^{N-1}t^{-2mk}K^{2m}\otimes K^{2k}.
\end{alignat*}
where line 4 follows from \eqref{eqn_rootsum}. This  yields the following formula for $R^2$,
\begin{equation} \label{eqn_rmatrixsquared}
R^2 = \frac{1}{N} \sum_{j,k\in\mathbb{Z}_N}t^{-2jk}K^{2j}\otimes K^{2k}.
\end{equation}

Next, using \eqref{eqn_twist}, we may write
\[ v\otimes v = \frac{1}{N^2}\left(\sum_{r\in\mathbb{Z}_{N}}(-1)^r t^{r^2}\right)^2\left(\sum_{j,k\in\mathbb{Z}_{N}}(-1)^{j+k} t^{-j^2-k^2}K^{2j+N}\otimes K^{2k+N}\right). \]
From this and \eqref{eqn_rmatrixsquared}, we obtain that $R^2(v\otimes v)$ is equal to
\begin{alignat*}{1}
& \frac{1}{N^3}\left(\sum_{r\in\mathbb{Z}_{N}}(-1)^r t^{r^2}\right)^2\left(\sum_{m,n,j,k\in\mathbb{Z}_N}(-1)^{j+k} t^{-2mn-j^2-k^2}K^{2m+2j+N}\otimes K^{2n+2k+N}\right) \\
&= \frac{1}{N^3}\left(\sum_{r\in\mathbb{Z}_{N}}(-1)^r t^{r^2}\right)^2\left(\sum_{s,s'\in\mathbb{Z}_N}\left(\sum_{j,k\in\mathbb{Z}_N}(-1)^{j+k} t^{-2(s-j)(s'-k)-j^2-k^2}\right)K^{2s+N}\otimes K^{2s'+N}\right) \\
&= \frac{1}{N^3}\left(\sum_{r\in\mathbb{Z}_{N}}(-1)^r t^{r^2}\right)^2\left(\sum_{s,s'\in\mathbb{Z}_N}t^{-2ss'}\left(\sum_{j,k\in\mathbb{Z}_N}(-1)^{j+k} t^{-(j+k)^2+2sk+2s'j}\right)K^{2s+N}\otimes K^{2s'+N}\right).
\end{alignat*}
Now  consider the coefficient
\begin{alignat*}{1}
\sum_{j,k\in\mathbb{Z}_N}(-1)^{j+k} t^{-(j+k)^2+2sk+2s'j} &= \sum_{j,k\in\mathbb{Z}_N}(-1)^j t^{-j^2+2sk+2s'(j-k)} \\
&= \left(\sum_{j\in\mathbb{Z}_N}(-1)^j t^{-j^2+2s'j}\right)\left(\sum_{k\in\mathbb{Z}_N}t^{2(s-s')k}\right).
\end{alignat*}
By \eqref{eqn_rootsum}, the right-hand factor is zero, unless $s=s'$, in which case it is equal to $N$. The left-hand factor is
\begin{eqnarray*}
\sum_{j\in\mathbb{Z}_N}(-1)^j t^{-j^2+2s'j} = t^{s'^2}\sum_{j\in\mathbb{Z}_N}(-1)^j t^{-(j-s')^2} = (-1)^{s'}t^{s'^2}\sum_{j\in\mathbb{Z}_N}(-1)^j t^{-j^2}.
\end{eqnarray*}
Consequently, $R^2(v\otimes v)$ equals
\begin{alignat*}{1}
& \frac{1}{N^2}\left(\sum_{r\in\mathbb{Z}_{N}}(-1)^r t^{r^2}\right)^2\left(\sum_{j\in\mathbb{Z}_N}(-1)^j t^{-j^2}\right)\left(\sum_{s\in\mathbb{Z}_N}(-1)^s t^{-s^2}K^{2s+N}\otimes K^{2s+N}\right) \\
&= \frac{1}{N}\left(\sum_{r\in\mathbb{Z}_{N}}(-1)^r t^{r^2}\right)\left(\sum_{s\in\mathbb{Z}_N}(-1)^s t^{-s^2}K^{2s+N}\otimes K^{2s+N}\right)=\Delta(v),
\end{alignat*}
where on the last line we have used \eqref{eqn_squareprod}. This proves \eqref{item_twist3}.
\end{proof}

\section{Modeling classical theta functions using the quantum group}

\subsection{Quantum link invariants}
Theorem \ref{thm_twist} implies that the quantum group $A:=\mathcal{U}_t(\mathfrak{u}_{\mathbb{C}}(1))$  can be used to define invariants of  knots and links in $S^3$, the 3-dimensional sphere. Let us recall the construction, which is described in detail in the general setting in \cite{reshetikhinturaev} and \cite{turaev}.

View $S^3$ as ${\mathbb R}^3$ compactified with the point at infinity, and in it fix a plane  and a direction in the plane called the vertical direction. Given an oriented framed link $L$ in $S^3$, deform it through an isotopy to a link whose framing is parallel to the plane, and  whose projection onto the plane is a link diagram that can be sliced by finitely many horizontal lines into pieces, each of which consists of several vertical strands and exactly one of the events from Figure \ref{fig_shortphenomena}, maybe with reversed arrows.

\begin{figure}[ht]
\centering
\includegraphics{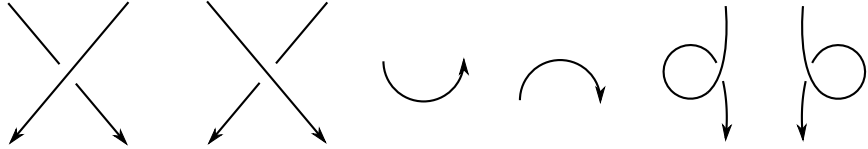}
\caption{Any link diagram may be decomposed into a sequence involving the above events.}
\label{fig_shortphenomena}
\end{figure}

Now consider a coloring $\mathbf{V}$ of the components of $L$ by irreducible representations of $\qgr$. This means that if $L$ has components $L_1,L_2,\ldots,L_m$,
then $\mathbf{V}$ is a map
\begin{eqnarray*}
\mathbf{V}:\{L_1,L_2,\ldots, L_m\}\rightarrow \{V^0,V^1,\ldots, V^{2N-1}\}.
\end{eqnarray*}
We shall denote the data consisting of $L$ together with its coloring $\mathbf{V}$ by $\mathbf{V}(L)$. Each of the horizontal lines considered is crossed by finitely many strands of $L$. To a strand that crosses downwards, associate to that crossing point the corresponding irreducible representation decorating that strand; to a strand that crosses upwards, associate the dual of that representation. Then take the tensor product of all these irreducible representations for that given horizontal line, from left to right. This defines a representation of $\qgr$. If the horizontal line does not intersect the link, the representation is automatically $V^0$.

The link diagram defines a composition of homomorphisms between the representations on each horizontal line when traveling in the vertical direction from below the link diagram to above the link diagram. The events from Figure \ref{fig_shortphenomena} define the following operators, from left to right
\begin{itemize}
\item
$\check{R}:=P\circ R$, where $R$ denotes multiplication by the $R$-matrix \eqref{eqn_rmatrix};
\item
$\check{R}^{-1}=R^{-1}\circ P$;
\item
the operator $V^0\to V^k\otimes (V^k)^*$ given by $e_0\mapsto e_{k}\otimes e^k$;
\item
the operator $(V^k)^*\otimes V^k\rightarrow V^0$ given by $e^k\otimes e_k\mapsto e_0$;
\item
the twist, given by multiplying by $v\in\qgr$;
\item
the inverse of the twist, given by multiplying by $v^{-1}$.
\end{itemize}
Vertical strands away from the events define the identity operators, which are then tensored with the operators of the events to form a homomorphism between the representations associated to the horizontal lines.

The colored link diagram defines an endomorphism of  $V^0=\mathbb{C}$, which is given by multiplication by a complex number $<\mathbf{V}(L)>$. Because $\qgr$ has the structure of a ribbon Hopf algebra, it follows from \cite[\S I.2, XI.3]{turaev} that $<\mathbf{V}(L)>$ is an invariant of colored framed oriented links, meaning that it depends only on the isotopy class of the link and not on its projection onto the plane.

In fact, according to \cite{turaev}, we may color these links using any representations of $\qgr$, and the above algorithm still leads to an isotopy invariant. There is even a simple ``cabling formula'', which states that if a link component is colored by the representation $U\otimes W$, then we may replace that link component by two parallel copies, one colored by $U$ and the other by $W$. In particular, if our link component is colored by $V^k$, then we may replace it with $k$ parallel copies which are each colored by $V^1$.

Furthermore, since this invariant is distributive with respect to direct sums of representations, its definition extends to links that are colored by elements of the representation ring of our quantum group $\qgr$. We may describe this extension explicitly as follows. If $L$ is a link whose components $L_1,\ldots, L_m$ are colored by elements of the representation ring of $\qgr$,
\begin{equation} \label{eqn_linkcoloring}
\mathbf{V}:L_j\mapsto\sum_{k=0}^{2N-1}c_{jk}V^k, \quad 1\leq j\leq m;
\end{equation}
then
\begin{eqnarray*}
<\mathbf{V}(L)>=\sum_{k_1=0}^{2N-1}\sum_{k_2=0}^{2N-1}\cdots
\sum_{k_m=0}^{2N-1}c_{1k_1}c_{2k_2}\cdots c_{mk_m}<\mathbf{V}_{k_1,k_2,\ldots,k_m}(L)>;
\end{eqnarray*}
where $\mathbf{V}_{k_1,k_2,\ldots, k_m}$ is the coloring of $L$ that decorates the component $L_j$ with the color $V^{k_j}$.

We wish to connect these colored links and their invariants to the skein theory of Section \ref{sec_thetafun}. This is the purpose of our next theorem, for which we will need the following definition.

\begin{definition} \label{def_coloredlinks}
Given an oriented 3-manifold $M$, we define $\mathcal{V}_{\qgr}(M)$ to be the vector space whose basis consists of isotopy classes of oriented framed links, whose components are colored by \emph{irreducible} representations of $\qgr$.

There is a map
\[ \mathcal{V}_{\qgr}(M) \to \widetilde{\mathcal{L}}_t(M) \]
which is defined by the cabling formula which replaces any link component colored by $V^j$ with $j$ parallel copies of that component.
\end{definition}

\begin{theorem} \label{thm_main}
The following diagram commutes
\begin{displaymath}
\xymatrix{ \mathcal{V}_{\qgr}(S^3) \ar[d]_{\mathrm{cabling}} \ar[rd]^{\mathrm{invariant}} \\ \widetilde{\mathcal{L}}_t(S^3) \ar@{=}[r] & \mathbb{C} }
\end{displaymath}
where the map on the right assigns to a colored link $\mathbf{V}(L)$, its invariant $<\mathbf{V}(L)>$, and the map on the left is given by the cabling formula of Definition \ref{def_coloredlinks}.
\end{theorem}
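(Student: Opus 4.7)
My strategy is to reduce to the case of a link whose components are all colored by the fundamental representation $V^1$, and then to verify that both maps in the diagram assign the same scalar to each elementary local move in a link diagram. Both maps are multiplicative under disjoint union and well-defined on isotopy classes: the invariant on the right by Theorem \ref{thm_twist} together with the general ribbon Hopf algebra machinery recalled above, and the cabling composed with skein evaluation on the left by construction of $\widetilde{\mathcal{L}}_t(S^3)$ and Definition \ref{def_coloredlinks}.

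To perform the reduction, I would invoke the cabling formula noted in the paragraph preceding the theorem. Because $V^j\cong (V^1)^{\otimes j}$ in the representation ring of $\qgr$, the invariant $<\mathbf{V}(L)>$ of a link whose $i$th component is colored by $V^{j_i}$ equals that of the link obtained by replacing the $i$th component by $j_i$ parallel copies of itself (with respect to the given framing), each colored by $V^1$. Since the cabling map on the left of the diagram performs precisely this substitution before evaluating in the skein module, it suffices to prove commutativity for links with every component colored by $V^1$.

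For such a link, the invariant is computed by decomposing a chosen diagram into the local events of Figure \ref{fig_shortphenomena} and multiplying the corresponding scalar contributions. A positive crossing between two $V^1$-strands contributes the scalar $t^{1\cdot 1}=t$ by which the $R$-matrix acts on $V^1\otimes V^1$, as forced by the defining identity \eqref{eqn_rmatrixidentity} with $m=n=1$; a positive framing twist contributes $t^{1^2}=t$ via \eqref{eqn_twistidentity} with $k=1$; and the cup and cap implement the canonical duality pairing $V^1\otimes (V^1)^*\leftrightarrow\mathbb{C}$, which closes a strand into an unknotted loop of quantum dimension $\dim V^1=1$. On the skein side, the relations of Figure \ref{fig_skeinrelations} assign exactly the same scalar $t$ to a smoothed positive crossing and the same scalar $t$ to a positive framing twist, while trivial unknotted components may be removed with value $1$, using the observation in the footnote preceding \eqref{eqn_pairingskein} that in $S^3$ any unknotted loop can be disentangled and deleted by the skein relations. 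Matching contributions event-by-event over a chosen diagram then yields the equality of the two scalars.

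The main obstacle is the careful bookkeeping around orientations and framings, especially verifying that the contribution of cups and caps (including the effect of the ribbon element when a strand is closed into a loop) is exactly compatible with the removal of a trivial circle in the skein module, and that the cabling map in Definition \ref{def_coloredlinks} uses exactly the parallel framing implicit in the cabling formula for $<\mathbf{V}(L)>$. Once this local compatibility is nailed down, the theorem follows formally by collecting the event-by-event scalar contributions over any diagram.
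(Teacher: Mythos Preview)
Your proposal is correct and follows essentially the same event-by-event local comparison as the paper's proof; the only organizational difference is that you first reduce to links colored entirely by $V^1$ via the cabling formula for the invariant and then match single-strand contributions, whereas the paper works directly with arbitrary colors $V^m,V^n,V^k$ and invokes the multi-strand skein identities of Figures \ref{fig_cross} and \ref{fig_twist} (which encode the same cabling on the skein side). Both arguments rest on the same verification that crossings, twists, and cups/caps contribute $t^{mn}$, $t^{k^2}$, and $1$ respectively on each side.
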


\begin{proof}
This is a basic consequence of our construction of the quantum group $\qgr:=\mathcal{U}_t(\mathfrak{u}_\mathbb{C}(1))$ which we carried out in Section \ref{sec_quantumgroup}. Present the colored link diagram as a composition of horizontal slices, each of which contains exactly one event from Figure \ref{fig_shortphenomena}. Using the canonical basis elements $e_j$ of $V^j$, the representation associated to each horizontal line may be identified with $\mathbb{C}$. Consequently, the homomorphism assigned to each horizontal slice is just multiplication by some complex number. For the events listed in Figure \ref{fig_shortphenomena}, these numbers are respectively
\[ t^{mn},t^{-mn},1,1,t^{k^2},t^{-k^2}; \]
where we have assumed that the strands of both the two left diagrams of Figure \ref{fig_shortphenomena} are labeled by $V^m$ and $V^n$ and that each strand of the two right diagrams is labeled by $V^k$. These numbers are a consequence of equations \eqref{eqn_rmatrixidentity} and \eqref{eqn_twistidentity}. The result is now a consequence of the identities detailed in figures \ref{fig_cross} and \ref{fig_twist}. Of course, the first two events of Figure \ref{fig_shortphenomena} may appear with a reversed arrow on the bottom right, but one may easily check that this does not affect the result.
\end{proof}

\begin{cor} \label{cor_colreplace}
Let $\mathbf{V}$ be a coloring of a link $L$ in $S^3$ by irreducible representations, and suppose that some link component of $L$ is colored by $V^n$ with $0\leq n\leq N-1$. If $\mathbf{V}'$ denotes the coloring of $L$ obtained
by replacing the color of that link component by $V^{n+N}$, then
\[ <\mathbf{V}(L)> = <\mathbf{V}'(L)>. \]
\end{cor}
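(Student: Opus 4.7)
The plan is to deduce the corollary directly from Theorem~\ref{thm_main} together with the defining relations of the reduced linking number skein module $\widetilde{\mathcal{L}}_t(S^3)$.

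By Theorem~\ref{thm_main}, the invariants $<\mathbf{V}(L)>$ and $<\mathbf{V}'(L)>$ are computed as the images in $\widetilde{\mathcal{L}}_t(S^3)=\mathbb{C}$ of the cablings of $L$ associated to $\mathbf{V}$ and $\mathbf{V}'$ respectively. These two cablings differ only in the chosen link component $K$: under $\mathbf{V}$, the component $K$ is replaced by $n$ parallel copies (using the framing of $K$), whereas under $\mathbf{V}'$, it is replaced by $n+N$ parallel copies. All remaining cabled components (corresponding to the other colorings on other components) are identical in the two pictures, so it suffices to show that these two bundles of parallel copies represent the same element of $\widetilde{\mathcal{L}}_t(S^3)$.

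Next I would carry out the following isotopy argument inside a tubular neighborhood of $K$. The framing of $K$ specifies an embedded annulus $A\subset S^3$ with core $K$, and all $n+N$ parallel copies may be taken to lie on $A$ as mutually disjoint parallel circles. By sliding these circles along $A$, one may isotope the configuration so that $n$ of them form a sub-bundle $B_n$ in a thin sub-annulus, and the remaining $N$ form a sub-bundle $B_N$ in a disjoint thin sub-annulus which does not meet $B_n$ nor the cablings of the other components of $L$. Picking any one of the curves in $B_N$ and calling it $\gamma$, the sub-bundle $B_N$ is precisely $\gamma^{\|N}$ sitting in a regular neighborhood of $\gamma$ that is disjoint from the union $L^\circ$ of $B_n$ and the cablings of the other components of $L$. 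Applying the defining relation $\gamma^{\|N}\cup L^\circ = L^\circ$ of $\widetilde{\mathcal{L}}_t(S^3)$ collapses the cabling of $\mathbf{V}'(L)$ to the cabling of $\mathbf{V}(L)$, and hence the two skeins coincide.

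Composing with Theorem~\ref{thm_main} gives $<\mathbf{V}(L)>=<\mathbf{V}'(L)>$, as desired. The only mildly delicate step is the isotopy that separates the $n+N$ parallel copies into the sub-bundles $B_n$ and $B_N$; this is routine, relying on the fact that parallel pushoffs on the framing annulus $A$ can be slid freely along $A$ and the framing of $\gamma$ inherited from $K$ agrees with the framing used to define $\gamma^{\|N}$.
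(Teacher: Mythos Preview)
Your proof is correct and is precisely the natural unpacking of the corollary from Theorem~\ref{thm_main}: the paper gives no explicit argument (it simply marks the statement with a proof-omitted symbol), and the intended proof is exactly what you wrote---apply the cabling description of the invariant and then invoke the defining relation $\gamma^{\|N}\cup L^\circ=L^\circ$ in $\widetilde{\mathcal{L}}_t(S^3)$ to strip off the extra $N$ parallel copies. Your isotopy step separating the $n+N$ pushoffs into $B_n\cup B_N$ on the framing annulus is routine and correctly handled, since the annulus lies in a tubular neighbourhood of $K$ disjoint from the other components.
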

\noproof

It follows that we may factor the representation ring by the ideal generated by the single relation $V^N=1$, without any affect on the invariants $<\mathbf{V}(L)>$. Let
\[ R(\qgr):=\mathbb{C}[V^1]/\left((V^1)^N-1\right) \]
denote this quotient. Note that in $R(\qgr)$,
\begin{eqnarray*}
(V^k)^*=V^{N-k} \quad\text{and}\quad V^{m}\otimes V^{n}=V^{m+n(\mathrm{mod}N)};
\end{eqnarray*}
where these equalities should be interpreted formally (i.e. inside $R(\qgr)$). Thus, we can think of the link invariant defined above as an invariant of links colored by elements of $R(\qgr)$.

\subsection{Theta functions as colored links in a handlebody}

Consider the Heegaard decomposition
\[ H_g \bigsqcup_{\partial H_g \approx \partial H_g} H_g = S^3 \]
of $S^3$ given by \eqref{eqn_heegaardmap}. This decomposition gives rise to a bilinear pairing
\begin{equation} \label{eqn_linkpairing}
\begin{array}{rccccc}
[ \cdot , \cdot ]: & \mathcal{V}_{\qgr}(H_g) \otimes \mathcal{V}_{\qgr}(H_g) & \to & \mathcal{V}_{\qgr}(S^3) & \to & \mathbb{C} \\
 & \mathbf{V}(L) \otimes \mathbf{V}'(L') & \mapsto & \mathbf{V}(L) \cup \mathbf{V}'(L') & \mapsto & <\mathbf{V}(L) \cup \mathbf{V}'(L')>
\end{array}
\end{equation}
on $\mathcal{V}_{\qgr}(H_g)$. Due to the obvious diffeomorphism of $S^3$ that swaps the component handlebodys, this pairing is symmetric. However, it is far from being nondegenerate, which leads us to the following definition.

\begin{definition}
The vector space $\widetilde{\mathcal{L}}_{\qgr}(H_g)$ is defined to be the quotient of the vector space $\mathcal{V}_{\qgr}(H_g)$ by the annihilator
\[ \Ann(\mathcal{V}_{\qgr}(H_g)):=\{ x\in\mathcal{V}_{\qgr}(H_g): [x,y]=0, \text{ for all } y\in\mathcal{V}_{\qgr}(H_g). \} \]
of the form \eqref{eqn_linkpairing}.
\end{definition}

The pairing induced on $\widetilde{\mathcal{L}}_{\qgr}(H_g)$ by \eqref{eqn_linkpairing} is nondegenerate.

\begin{proposition} \label{prop_thetaquantiso}
Consider the cabling map from $\mathcal{V}_{\qgr}(H_g)$ to $\widetilde{\mathcal{L}}_t(H_g)$ described in Definition \ref{def_coloredlinks}. This map factors to an isomorphism,
\[ \widetilde{\mathcal{L}}_{\qgr}(H_g) \cong \widetilde{\mathcal{L}}_t(H_g). \]
\end{proposition}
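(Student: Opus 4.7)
The plan is to exploit the fact that cabling intertwines the quantum-invariant pairing \eqref{eqn_linkpairing} on $\mathcal{V}_\qgr(H_g)$ with the skein pairing \eqref{eqn_pairingskein} on $\widetilde{\mathcal{L}}_t(H_g)$. For basis elements $x=\mathbf{V}(L)$ and $y=\mathbf{V}'(L')$, gluing along the Heegaard surface produces the coloured link $\mathbf{V}(L)\cup\mathbf{V}'(L')$ in $S^3$. Since the cabling operation (call it $c$) obviously commutes with disjoint union and with Heegaard gluing, Theorem \ref{thm_main} identifies $[x,y] = \langle \mathbf{V}(L)\cup\mathbf{V}'(L')\rangle$ with the skein-pairing value $[c(x),c(y)]$ computed in $\widetilde{\mathcal{L}}_t(S^3)=\mathbb{C}$. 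This intertwining identity is the single structural fact driving the whole proof.

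From here I would proceed in three steps. Step 1 (surjectivity of $c$): by Theorem \ref{thm_thetaskein}, $\widetilde{\mathcal{L}}_t(H_g)$ is spanned by the skeins $a_1^{\mu_1}\cdots a_g^{\mu_g}$ for $\mu\in\mathbb{Z}_N^g$, and each of these is the image under $c$ of the framed link in $H_g$ consisting of the core curves $a_1,\ldots,a_g$ with the $i$th core coloured by the irreducible representation $V^{\mu_i}$. Step 2 ($\ker c = \Ann\mathcal{V}_\qgr(H_g)$): the inclusion $\ker c \subseteq \Ann\mathcal{V}_\qgr(H_g)$ is immediate from the intertwining identity, since $c(x)=0$ implies $[x,y]=[c(x),c(y)]=0$ for all $y$. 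Conversely, if $x\in\Ann\mathcal{V}_\qgr(H_g)$ then $[c(x),c(y)]=0$ for every $y\in\mathcal{V}_\qgr(H_g)$; by Step 1 this gives $[c(x),z]=0$ for every $z\in\widetilde{\mathcal{L}}_t(H_g)$, whereupon the non-degeneracy of the skein pairing noted after \eqref{eqn_pairingskein} forces $c(x)=0$. Step 3: the previous two steps combine to show that the induced map $\widetilde{\mathcal{L}}_\qgr(H_g)\to\widetilde{\mathcal{L}}_t(H_g)$ is a well-defined $\mathbb{C}$-linear bijection.

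The only point requiring a sanity check is that the cabling formula, which a priori sends a $V^{k+N}$-coloured component to $N+k$ parallel copies, is compatible with the relation $\gamma^{\|N}\cup L = L$ in $\widetilde{\mathcal{L}}_t(H_g)$ and with the identification $V^{k+N}\equiv V^k$ at the quantum invariant level established in Corollary \ref{cor_colreplace}; these two identifications match, so no inconsistency arises. Everything else is routine bookkeeping around the non-degeneracy of the skein pairing, which is the key analytic input. Thus I expect no serious obstacle: the crux is simply phrasing the intertwining identity cleanly enough that Steps 1 and 2 become one-line arguments.
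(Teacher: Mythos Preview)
Your proposal is correct and follows essentially the same route as the paper: both arguments rest on the intertwining identity $[x,y]=[c(x),c(y)]$ supplied by Theorem~\ref{thm_main}, the surjectivity of the cabling map, and the nondegeneracy of the skein pairing~\eqref{eqn_pairingskein}, from which $\ker c=\Ann\mathcal{V}_\qgr(H_g)$ follows exactly as you wrote. The only cosmetic difference is that the paper treats surjectivity as obvious (any framed link is the cabling of itself coloured everywhere by $V^1$) rather than invoking the explicit basis from Theorem~\ref{thm_thetaskein}, and it compresses your Step~2 into a single sentence.
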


\begin{proof}
By Theorem \ref{thm_main}, the following diagram commutes;
\begin{displaymath}
\xymatrix{ \mathcal{V}_{\qgr}(H_g)\otimes\mathcal{V}_{\qgr}(H_g) \ar[rr]^{\mathrm{cabling}} \ar[d] && \widetilde{\mathcal{L}}_t(H_g)\otimes\widetilde{\mathcal{L}}_t(H_g) \ar[d] \\ \mathcal{V}_{\qgr}(S^3) \ar[rr]^{\mathrm{cabling}} \ar[rd]_{\mathrm{invariant}} && \widetilde{\mathcal{L}}_t(S^3) \ar@{=}[ld] \\ & \mathbb{C} }
\end{displaymath}
Since the cabling map from $\mathcal{V}_{\qgr}(H_g)$ to $\widetilde{\mathcal{L}}_t(H_g)$ is obviously surjective, the result follows from the fact that the pairing \eqref{eqn_pairingskein} appearing on the right of the above diagram is nondegenerate.
\end{proof}

\begin{cor} \label{cor_thetacolor}
The space $\widetilde{\mathcal{L}}_{\qgr}(H_g)$ is isomorphic to the space of theta functions $\spacetheta(\RS)$.
\end{cor}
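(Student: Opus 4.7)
The plan is to deduce the corollary immediately by composing two isomorphisms that are already available to us at this point in the paper. Specifically, Theorem \ref{thm_thetaskein} provides an isomorphism
\[ \spacetheta(\RS) \xrightarrow{\cong} \widetilde{\mathcal{L}}_t(H_g), \]
sending the theta series $\theta_\mu^\Pi$ to the skein $a_1^{\mu_1}\cdots a_g^{\mu_g}$, while Proposition \ref{prop_thetaquantiso} provides an isomorphism
\[ \widetilde{\mathcal{L}}_{\qgr}(H_g) \xrightarrow{\cong} \widetilde{\mathcal{L}}_t(H_g) \]
induced by the cabling map. Composing the first with the inverse of the second yields the desired isomorphism $\spacetheta(\RS) \cong \widetilde{\mathcal{L}}_{\qgr}(H_g)$.

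There is essentially no obstacle to overcome; the content of the corollary has already been proved in the preceding results, and the proof is a single sentence pointing this out. The only minor consideration is clarity: one might wish to record the explicit form of the composed isomorphism, namely that $\theta_\mu^\Pi$ corresponds to the class in $\widetilde{\mathcal{L}}_{\qgr}(H_g)$ of the framed link $a_1 \cup a_2 \cup \cdots \cup a_g$ with the $j$-th component coloured by the irreducible representation $V^{\mu_j}$, since its cabling is precisely $a_1^{\mu_1}\cdots a_g^{\mu_g}\in\widetilde{\mathcal{L}}_t(H_g)$. This makes the correspondence between classical theta functions and coloured links in the handlebody entirely transparent and justifies the subsection title.
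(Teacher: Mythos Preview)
Your proof is correct and matches the paper's approach exactly: the paper's proof is the single sentence ``This is a consequence of Theorem \ref{thm_thetaskein},'' which, placed immediately after Proposition \ref{prop_thetaquantiso}, amounts to precisely the composition you describe. Your added remark identifying $\theta_\mu^\Pi$ with the link $a_1\cup\cdots\cup a_g$ coloured by $V^{\mu_1},\ldots,V^{\mu_g}$ also agrees with what the paper records right after the corollary (Figure \ref{fig_thetabasis}).
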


\begin{proof}
This is a consequence of Theorem \ref{thm_thetaskein}.
\end{proof}

Consequently, we may represent the theta series $\theta^{\Pi}_{k}$ as colored links in the handlebody $H_g$. More precisely, the coloring represented by Figure \ref{fig_thetabasis} of $a_1,\ldots, a_g$ by $V^{k_1},\ldots, V^{k_g}$ respectively, corresponds to the theta series $\theta^{\Pi}_{k_1,\ldots, k_g}$.
\begin{figure}[ht]
\centering
\scalebox{.30}{\includegraphics{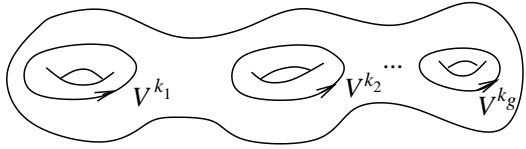}}
\caption{The presentation of $\theta^{\Pi}_{k_1,\ldots, k_g}$ as a colored link in a handlebody.}
\label{fig_thetabasis}
\end{figure}

\subsection{The Schr\"odinger representation and the Hermite-Jacobi action via quantum group representations}

Consider the spaces $\mathcal{V}_{\qgr}(\RS\times [0,1])$ and $\mathcal{V}_{\qgr}(H_g)$. As before, by \eqref{eqn_surfaceglue}, we see that $\mathcal{V}_{\qgr}(\RS\times [0,1])$ is an algebra and, by \eqref{eqn_bdryglue}, that $\mathcal{V}_{\qgr}(H_g)$ is a module over this algebra. Since the cabling maps to the corresponding reduced skein modules are equivariant with respect to this action, it follows from Proposition \ref{prop_thetaquantiso} that this action descends to $\widetilde{\mathcal{L}}_{\qgr}(H_g)$.

We wish to cast the action of the finite Heisenberg group on the space of theta functions within this setting. Given $p,q\in\{0,\ldots,N-1\}^g$, set
\[ \Gamma(p,q):= a_1^{V^{p_1}}a_2^{V^{p_2}}\cdots a_g^{V^{p_g}} b_1^{V^{q_1}}b_2^{V^{q_2}}\cdots b_g^{V^{q_g}}; \]
where $\gamma^{V^j}\in\mathcal{V}_{\qgr}(\RS\times [0,1])$ denotes the curve $\gamma$ colored by the irreducible representation $V^j$.

\begin{proposition}
Let $\Phi:\spacetheta(\RS)\to\widetilde{\mathcal{L}}_{\qgr}(H_g)$ denote the isomorphism of Corollary \ref{cor_thetacolor}, then
\[ \Phi[(p,q,k)\cdot\theta] = t^{(k-p^Tq)}\Gamma(p,q)\cdot\Phi[\theta]; \quad (p,q,k)\in\heis(\mathbb{Z}_N^g),\theta\in\spacetheta(\RS). \]
\end{proposition}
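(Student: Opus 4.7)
The plan is to decompose the isomorphism $\Phi$ as a composite and transport the identity to the already-understood setting of the linking number skein module. By Corollary \ref{cor_thetacolor}, $\Phi$ factors as $\Phi = C^{-1}\circ \Psi$, where $\Psi:\spacetheta(\RS)\to\widetilde{\mathcal{L}}_t(H_g)$ is the isomorphism of Theorem \ref{thm_thetaskein} sending $\theta_\mu^\Pi$ to $a_1^{\mu_1}\cdots a_g^{\mu_g}$, and $C:\widetilde{\mathcal{L}}_{\qgr}(H_g)\to\widetilde{\mathcal{L}}_t(H_g)$ is the cabling isomorphism of Proposition \ref{prop_thetaquantiso}. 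The strategy is to compute the left-hand side by pushing the Heisenberg action through $\Psi$, and then pull the result back through $C$ to rewrite it in terms of $\Gamma(p,q)$.

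For the first step, I would combine Proposition \ref{prop_thetaops} and Theorem \ref{thm_linkingheisenberg} to recognize that, after transport by $\Psi$, the Schr\"odinger action of $(p,q,k)\in\heis(\mathbb{Z}_N^g)$ on $\spacetheta(\RS)$ is identified with multiplication in $\widetilde{\mathcal{L}}_t(\RS\times[0,1])$ by the skein
\[ s(p,q,k) := t^{(k-p^Tq)}\, a_1^{p_1}\cdots a_g^{p_g} b_1^{q_1}\cdots b_g^{q_g}, \]
acting on $\widetilde{\mathcal{L}}_t(H_g)$ via the module structure from \eqref{eqn_bdryglue}. This gives the intermediate identity $\Psi[(p,q,k)\cdot\theta] = s(p,q,k)\cdot\Psi[\theta]$.

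For the second step, I would note that the cabling map $\mathcal{V}_{\qgr}(\,\cdot\,)\to\widetilde{\mathcal{L}}_t(\,\cdot\,)$ sends $\Gamma(p,q)$ to the skein $a_1^{p_1}\cdots a_g^{p_g}b_1^{q_1}\cdots b_g^{q_g}$ (immediate from the cabling rule of Definition \ref{def_coloredlinks}) and intertwines the algebra action of $\mathcal{V}_{\qgr}(\RS\times[0,1])$ on $\mathcal{V}_{\qgr}(H_g)$ with the algebra action of $\widetilde{\mathcal{L}}_t(\RS\times[0,1])$ on $\widetilde{\mathcal{L}}_t(H_g)$, as remarked just before the statement. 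Since this action descends to $\widetilde{\mathcal{L}}_{\qgr}(H_g)$, the isomorphism $C$ is equivariant; therefore $C^{-1}$ carries multiplication by $a_1^{p_1}\cdots a_g^{p_g}b_1^{q_1}\cdots b_g^{q_g}$ into multiplication by $\Gamma(p,q)$. Applying $C^{-1}$ to the intermediate identity and factoring out the scalar $t^{(k-p^Tq)}$ then yields exactly the required formula.

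The only subtle point, and the one I would flag as the main thing to justify carefully, is the equivariance of $C$: one has to verify both that $\Ann(\mathcal{V}_{\qgr}(H_g))$ is stable under the $\mathcal{V}_{\qgr}(\RS\times[0,1])$-action (so that the module action descends to $\widetilde{\mathcal{L}}_{\qgr}(H_g)$) and that the cabling maps for $\RS\times[0,1]$ and $H_g$ together intertwine these two module structures. Both follow from the compatibility of the gluing \eqref{eqn_bdryglue} with cabling, but it is worth stating this explicitly as the lemma that powers the whole argument; once it is in hand, the proposition reduces to the two short computations above.
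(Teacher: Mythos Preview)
Your proposal is correct and is essentially the same argument as the paper's, just unpacked: the paper compresses your two steps into a single commutative diagram built from Proposition~\ref{prop_thetaquantiso}, Theorem~\ref{thm_linkingheisenberg}, and Theorem~\ref{thm_thetaskein}, and then makes exactly your key observation that the image of $(p,q,k)$ under \eqref{eqn_linkheis} coincides with the image of $t^{(k-p^Tq)}\Gamma(p,q)$ under cabling. Your explicit flagging of the equivariance of $C$ is a useful clarification of what the paper leaves implicit in the phrase ``the cabling maps are equivariant.''
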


\begin{proof}
Consider the commutative diagram
\begin{displaymath}
\xymatrix{ \mathcal{V}_{\qgr}(\RS\times [0,1])\otimes\widetilde{\mathcal{L}}_{\qgr}(H_g) \ar[r]^-{\mathrm{cabling}} \ar[d] & \widetilde{\mathcal{L}}_t(\RS\times [0,1])\otimes\widetilde{\mathcal{L}}_t(H_g) \ar[d] & \mathbb{C}(\heis(\mathbb{Z}_N^g))\otimes\spacetheta(\RS) \ar[d] \ar[l] \\ \widetilde{\mathcal{L}}_{\qgr}(H_g) \ar[r]^-{\mathrm{cabling}}_-{\cong} & \widetilde{\mathcal{L}}_t(H_g) & \spacetheta(\RS) \ar[l]^-{\cong} }
\end{displaymath}
where we have used Proposition \ref{prop_thetaquantiso} and Theorems \ref{thm_linkingheisenberg} and \ref{thm_thetaskein}. Since the image of $(p,q,k)$ in $\widetilde{\mathcal{L}}_t(\RS\times [0,1])$ under \eqref{eqn_linkheis} coincides with the image of $t^{(k-p^Tq)}\Gamma(p,q)$ under the cabling map, this proves the result.
\end{proof}

Note that the following corollary is the abelian analogue of the main result in \cite{gelcauribe0}.

\begin{cor}
The Weyl quantization and the quantum group quantization of the moduli space of flat $\mathfrak{u}(1)$-connections on a closed surface are equivalent.
\end{cor}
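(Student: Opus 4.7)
The plan is to observe that this corollary is essentially a repackaging of the immediately preceding Proposition together with Corollary \ref{cor_thetacolor}, so the ``proof'' reduces to making precise what data each quantization produces and then pointing out the intertwining. Concretely, both quantizations assign to the closed surface $\RS$ a pair consisting of a Hilbert space together with a distinguished algebra of operators representing classical observables, and one must check that there is a unitary isomorphism of Hilbert spaces which carries the algebra produced by Weyl quantization onto the one produced by quantum group quantization.

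First I would recall what each side consists of. On the Weyl side, the Hilbert space is $\spacetheta(\RS)$ and the algebra of quantized observables is generated, via the Schr\"odinger representation \eqref{eqn_schrorep}, by the operators $O_{pq}$; by Proposition \ref{prop_thetaops} this algebra is a quotient of $\mathbb{C}[\heis(\mathbb{Z}_N^g)]$. On the quantum group side, the Hilbert space is $\widetilde{\mathcal{L}}_{\qgr}(H_g)$ and the algebra of observables is generated by colored curve operators $\Gamma(p,q) \in \mathcal{V}_{\qgr}(\RS\times[0,1])$ acting by the module structure coming from \eqref{eqn_bdryglue}.

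Next I would invoke Corollary \ref{cor_thetacolor} to obtain the underlying vector space isomorphism $\Phi:\spacetheta(\RS) \to \widetilde{\mathcal{L}}_{\qgr}(H_g)$, and then cite the preceding Proposition to deduce that under $\Phi$ the generator $e^{k\pi i/N}O_{pq}$ of the Weyl-quantized algebra is carried to $t^{(k-p^Tq)}\Gamma(p,q)$, which is a generator of the quantum group algebra of observables (up to the scalar $t^{(k-p^Tq)}$ needed to reconcile the two normalizations). Hence the two algebras are identified as subalgebras of $\mathrm{End}\bigl(\widetilde{\mathcal{L}}_{\qgr}(H_g)\bigr) \cong \mathrm{End}\bigl(\spacetheta(\RS)\bigr)$. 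Finally, since the Weyl Hilbert space $\spacetheta(\RS)$ comes equipped with the inner product \eqref{eqn_hilbertform}, transporting this along $\Phi$ equips $\widetilde{\mathcal{L}}_{\qgr}(H_g)$ with a Hilbert space structure for which the intertwining map is, tautologically, unitary.

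The main obstacle, really the only one, is not a computational step but the interpretive one: one has to agree on what ``equivalent quantizations'' means for two \emph{a priori} different quantization procedures. The reasonable notion is the existence of a unitary isomorphism of Hilbert spaces intertwining the two distinguished algebras of quantized observables, and with that definition the corollary is immediate from the preceding proposition. The reader may compare with \cite{gelcauribe0}, where the analogous statement is established in the nonabelian setting and where the matching of observables is considerably more delicate.
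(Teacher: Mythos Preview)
Your proposal is correct and matches the paper's approach: the paper gives no proof at all (just a \ensuremath{\square}), treating the corollary as an immediate consequence of the preceding Proposition together with Corollary~\ref{cor_thetacolor}, which is exactly what you have unpacked. Your only addition is the explicit discussion of what ``equivalent quantizations'' should mean and the remark about transporting the inner product, both of which are reasonable clarifications the paper leaves implicit.
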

\noproof

Denote by $\mathcal{V}'_{\qgr}(\RS\times [0,1])$ the vector space that is freely generated by isotopy classes of links colored by elements of the \emph{representation ring} of $\qgr$. There is a multiplicative map from this space onto $\mathcal{V}_{\qgr}(\RS\times [0,1])$; if $L$ is a link with $m$ components whose coloring $\mathbf{V}$ by the representation ring is written as in \eqref{eqn_linkcoloring}, then this map is given by
\[ \mathbf{V}(L) \mapsto \sum_{k_1,\ldots,k_m=0}^{2N-1}c_{1k_1}\cdots c_{mk_m}\mathbf{V}_{k_1,\ldots,k_m}(L); \]
where $\mathbf{V}_{k_1,\ldots, k_m}$ is the coloring of $L$ that decorates the component $L_j$ with the color $V^{k_j}$.

Consequently, $\widetilde{\mathcal{L}}_{\qgr}(H_g)$ is a module over $\mathcal{V}'_{\qgr}(\RS\times [0,1])$. In fact, in view of Corollary \ref{cor_colreplace}, we may assume that our links are colored by elements of the quotient ring $R(\qgr)$. If $L$ is an oriented framed link in $\RS\times [0,1]$, we denote by $\Omega_{\qgr}(L)$ the element of $\mathcal{V}'_{\qgr}(\RS\times [0,1])$ that is obtained by coloring each component of $L$ by $N^{-\frac{1}{2}}\sum_{k=0}^{N-1} V^k$.

\begin{proposition}
Let $h_L\in\mcg{\RS}$ be a diffeomorphism that is represented by surgery on a framed link $L$. By Corollary \ref{cor_thetacolor} we may consider the discrete Fourier transform $\rho(h_{L})$ as an endomorphism of $\widetilde{\mathcal{L}}_{\qgr}(H_g)$. This endomorphism is given (projectively) by:
\[ \rho(h_{L})[\beta] = \Omega_{\qgr}(L)\cdot\beta, \quad \beta\in\widetilde{\mathcal{L}}_{\qgr}(H_g). \]
\end{proposition}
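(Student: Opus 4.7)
The plan is to transport the skein-theoretic description of the Hermite-Jacobi action (Theorem \ref{thm_skeinfourier}) across the isomorphism of Proposition \ref{prop_thetaquantiso}. Concretely, I will identify $\widetilde{\mathcal{L}}_{\qgr}(H_g)$ with $\widetilde{\mathcal{L}}_t(H_g)$ via the cabling isomorphism, show that the cabling map sends $\Omega_{\qgr}(L)$ to the skein $\Omega(L)$ of the previous section, and then invoke Theorem \ref{thm_skeinfourier}.

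First I would verify equivariance of the cabling map with respect to the module action. The cabling map $\mathcal{V}_{\qgr}(M) \to \widetilde{\mathcal{L}}_t(M)$ is defined link-by-link, and the module structures on both sides arise from the gluing \eqref{eqn_bdryglue}. It follows immediately that if $\mathbf{V}(L)\in\mathcal{V}_{\qgr}(\RS\times[0,1])$ cables to a skein $s\in\widetilde{\mathcal{L}}_t(\RS\times[0,1])$ and $\mathbf{V}'(L')\in\mathcal{V}_{\qgr}(H_g)$ cables to $s'\in\widetilde{\mathcal{L}}_t(H_g)$, then the action $\mathbf{V}(L)\cdot\mathbf{V}'(L')$ cables to $s\cdot s'$. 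Extending $\mathcal{V}_{\qgr}$ to $\mathcal{V}_{\qgr}'$ (allowing coloring by elements of the representation ring) this remains true by linearity.

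Next I would compute the image of $\Omega_{\qgr}(L)$ under cabling. Writing $L=L_1\cup\cdots\cup L_m$, the definition of $\Omega_{\qgr}$ colors each $L_i$ by $N^{-1/2}\sum_{k=0}^{N-1}V^k$; by the cabling formula and multilinearity this maps to
\[ \frac{1}{N^{m/2}}\sum_{k_1,\ldots,k_m=0}^{N-1} L_1^{\|k_1}\cup\cdots\cup L_m^{\|k_m}, \]
which is exactly $\Omega(L)\in\widetilde{\mathcal{L}}_t(\RS\times[0,1])$.

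Combining these two steps, if $\Phi: \widetilde{\mathcal{L}}_{\qgr}(H_g) \xrightarrow{\cong} \widetilde{\mathcal{L}}_t(H_g)$ denotes the cabling isomorphism, then for every $\beta\in\widetilde{\mathcal{L}}_{\qgr}(H_g)$,
\[ \Phi(\Omega_{\qgr}(L)\cdot\beta) = \Omega(L)\cdot \Phi(\beta). \]
Theorem \ref{thm_skeinfourier} asserts that the right-hand side is a unitary representative for $\rho(h_L)[\Phi(\beta)]$, so transporting back across $\Phi$ yields the claimed identity (projectively). I expect no serious obstacle: the only substantive checks are the equivariance of cabling under the module structure and the identification of the two $\Omega$'s, both of which are essentially bookkeeping. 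The real content of the statement sits inside Theorem \ref{thm_skeinfourier}, which is imported from \cite{gelcauribe1}.
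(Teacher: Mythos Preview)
Your proposal is correct and follows essentially the same argument as the paper: identify the image of $\Omega_{\qgr}(L)$ under the cabling map with $\Omega(L)$, use the equivariance of cabling with respect to the module action, and invoke Theorem \ref{thm_skeinfourier}. The paper compresses all of this into a single sentence, but the content is identical.
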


\begin{proof}
Since the image of $\Omega_A(L)$ in $\widetilde{\mathcal{L}}_t(\RS\times [0,1])$ under the cabling map coincides with $\Omega(L)$, this is a consequence of Theorem \ref{thm_skeinfourier} and the fact that the cabling map is equivariant.
\end{proof}

\begin{remark}
Of course, by Remark \ref{rem_dehnaction}, it suffices to be able to apply the above proposition to Dehn twists.
\end{remark}

\end{document}